\documentclass{amsart}[12pt]
\usepackage{mathpazo}
\usepackage{amsmath,amssymb,amsfonts}
\usepackage{amsthm}
\usepackage{hyperref}
\usepackage{mathabx}
\usepackage{mathrsfs}

\newtheorem{theorem}{Theorem}[section]
\newtheorem{lemma}[theorem]{Lemma}
\theoremstyle{definition}
\newtheorem{definition}[theorem]{Definition}
\newtheorem{example}[theorem]{Example}

\newtheorem{corollary}[theorem]{Corollary}

\newtheorem{proposition}[theorem]{Proposition}
\newtheorem{remark}[theorem]{Remark}
\numberwithin{equation}{section}
\newcommand\numberthis{\addtocounter{equation}{1}\tag{\theequation}}
            \title{On $\phi$-Best Proximity Points and Proximal-type Algorithms in Banach Spaces}
			\author[ P. P. Behera, C. Nahak]
			{Priyanka Priyadarshini Behera and C. Nahak}
			\address{{$^{1}$} Priyanka Priyadarshini Behera and {$^{2}$} C. Nahak,
				Department of Mathematics,
				Indian Institute of Technology Kharagpur, 
				Kharagpur, India-721302}
			\email{{$^{1}$} priyanka.pb@iitkgp.ac.in, {$^{2}$} cnahak@maths.iitkgp.ac.in}

			\subjclass{47J25; 47H05}
			\keywords{Generalized projection operator; $\phi$-best proximity points; proximally weakly suppressive mapping; property ($\phi$-UC); projection algorithms}
			\date{}
\begin{document}

\begin{abstract}
The purpose of this paper is to study the existence and convergence of $\phi$-best proximity points in Banach spaces. A strong convergence theorem is established by employing a shrinking projection algorithm designed to compute common $\phi$-best proximity points of a proximally weakly suppressive mapping. Finally, we address a projection scheme to solve split $\phi$-best proximity point and variational problem in a Banach space. These contributions provide new tools for solving nonlinear problems involving non-self mappings.
\end{abstract}

\maketitle
\section{Introduction} \label{sec1}
Let $B$ be a real Banach space with the norm $\Vert \cdot \Vert$,
and $B^*$ be its dual space. Let $K$ be a nontrivial, closed subset of
$B$ and $T\colon K \to K$ be a mapping. Let $F(T)$ symbolize the
fixed point set of $T$.  Throughout the paper, we will use the
following notations:

\begin{enumerate}
\item $S_B=\{u \in B : \Vert u \Vert=1 \}$ for the unit sphere.
\item $\langle \cdot , \cdot \rangle $ for the duality pair of $B$ and
$B^*$.
\item $\rightarrow$ for strong convergence, $\xrightarrow{w}$ for  weak convergence and $\xrightarrow{w^*}$ for weak$^*$ convergence.
\end{enumerate}

A Banach space $B$ is said to be strictly convex if, for any two
distinct elements $u$, $v \in S_B$, $\Vert \frac{u+v}{2} \Vert < 1$,
and uniformly convex if for any $\epsilon\in(0,2]$, there exists
$\delta>0$, so that for any two elements $u$ and $v$ in $S_B$ with
$\Vert u-v\Vert \to 0$, it follows that $\Vert \frac{u+v}{2}
\Vert\le1-\delta$. $B$ is said to have the property (KK) if the norm
and weak convergence coincide on the unit sphere $S_B$. It has been
proved that every uniformly convex Banach space is strictly convex,
reflexive and satisfies property (KK) (see \cite{VT, J}).\\
$B$ is said to be smooth if for every $u$, $v \in S_B$ and $t \in
\mathbb{R}$, the limit
\begin{align*}
\lim_{t \to 0} \frac{\Vert u+tv \Vert - \Vert u \Vert}{t}, \numberthis \label{lim999}
\end{align*}
exists. $B$ is called uniformly smooth if limit (\ref{lim999}) is
attained uniformly.

The duality mapping $J$ of $B$  is defined by
$$ Ju=\{ \zeta \in B^* : \langle u, \zeta \rangle = \Vert u \Vert^2=\Vert \zeta \Vert^2 \}, ~ \text{for every}~ u \in B.
$$
$J$ is single-valued in a smooth Banach space $B$, and coincides the
identity operator in a Hilbert space.  Among its properties, it is
worth mentioning that it is uniformly norm-to-norm continuous on
each bounded subset of a uniformly smooth Banach space $B$ (see
\cite{ADS, VT} for more details).

Recall that for a smooth Banach space $B$,  $J$ is said to be weakly
sequentially continuous if $u_n \xrightarrow{w} u$  in $B$ implies
$Ju_n \xrightarrow{w^*} Ju$ in $B^*$ (for more details see
\cite{J}).

Let $B$ be a smooth Banach space. Following Alber \cite{A}, we
define the mapping
\begin{align*}
\phi \colon B \times B \to \mathbb{R},\quad \phi(u, v)=\Vert u
\Vert^2 - 2 \langle u, Jv \rangle + \Vert v \Vert^2. \numberthis
\label{phi}
\end{align*}
Some properties of the functional $\phi$ are enumerated below (see
\cite{A, AR, KT, MT} for more details):
\begin{itemize}
\item[($\phi$1)] $(\Vert u \Vert - \Vert v \Vert)^2 \leq \phi(u, v) \leq (\Vert u \Vert + \Vert v \Vert)^2, ~ u, v \in B.$

\item[($\phi$2)] $ \phi(u, v)=\phi(u, z)+\phi(z, v) + 2 \langle u-z, Jz-Jv \rangle, ~ u, v, z \in B.$

\item[($\phi$3)] \cite[Remark 2.1]{MT} $\phi(u, v)=0$ if and only if $u=v$, for $u, v$ in a strictly convex and smooth Banach space $B$.
\end{itemize}

Let $K$ be a nontrivial, closed, and convex subset of a strictly
convex, reflexive and smooth Banach space $B$ and $u \in B$. Then
there exists a unique element $u_0 \in K$ such that
$$
\phi(u_0, u)=\inf_{v \in K} \phi(v,u).
$$
We denote this element as $\Pi_Ku$, for every $u \in B$
and refer $\Pi_K$ as the generalized projection onto $K$. To get
more insight on generalized projections, one may refer to
\cite{KT,L, MT}.

Alber {\it et al}. \cite{A1} introduced the notion of a nonextensive
mapping in a Banach space via the generalized projection operator.
Let $(M, N)$ be two nontrivial subsets of a smooth Banach space $B$.
We say that a non-self mapping $T\colon  M \to N$ is nonextensive if
\begin{align*}
\phi(Tu, Tv) \leq \phi(u, v), ~ \text{for all}~ u, v \in M. \numberthis \label{nonexpansive}
\end{align*}
Observe that, in a Hilbert space $H$, $\phi(u, v)=\Vert u - v
\Vert^2$, for $u$, $v \in H$. Then, (\ref{nonexpansive}) is
equivalent to
 \begin{align*}
 \phi(Tu, Tv) \leq \phi(u, v) \Leftrightarrow \Vert Tu - Tv \Vert \leq \Vert u - v \Vert,\,\,\mbox{for all}\,\,u,v\in M.
\end{align*}
In this case, we recognize the widely-known nonexpansive mapping.

Let $K$ be a nontrivial, closed, convex subset of a Banach space $B$ and $f: B \to B^*$ be a mapping. The variational inequality problem defined by $f$ and $K$ is given by:
$$
VIP(f, K): ~ \text{find} ~ u^* \in K ~ \text{such that} ~ \langle f(u^*), u-u^* \rangle \geq 0, ~ \text{for every} ~ u \in K.
$$
\begin{definition}
Let $B$ be a Banach space. An operator $A: B \to 2^{B^*}$ is called 
\begin{itemize}
    \item[(i)] monotone if for $u,v \in B, ~ u^* \in Au, v^* \in Av$, $\langle u-v, u^*-v^* \rangle \geq 0$,

    \item[(ii)] $\mu$-inverse strongly monotone ($\mu$-ISM) if there exists a constant $\mu >0$ such that 
$$
\langle u-v, u^*-v^* \rangle \geq \mu \Vert u^* - v^* \Vert^2.
$$
\end{itemize}
\end{definition}

The Banach fixed point theorem concerns certain self mappings of a complete metric space and it states conditions sufficient for the existence and uniqueness of a fixed point. The theorem also gives an iterative process by which we can obtain approximations to the fixed point and error bounds. Some important fields of applications of this theorem are lies in linear algebraic equations, ordinary differential equations, integral equations etc. If we extend the fixed point problem for non-self mappings, the fixed point equation may not have a solution. In such case, it is desirable to find an approximate solution such that the error is minimized. In this concern, the notion of best proximity points has came into known. 
A mapping $T\colon B \to B$ is said to possess a fixed point if the
equation $Tu=u$ has at least one solution. When considering a
generalized projection operator, which may not behave like a metric,
we define a fixed point $u\in B$ of $T$ as one which checks the
equality $\phi(u, Tu)=0$. This appoints the fixed point equation in
a strictly convex and smooth Banach space. In the case of a non-self
mapping $T$, the equation $\phi(u, Tu)=0$ may not have a solution,
resulting in $\phi(u, Tu) >0$. More precisely, let us take two
nontrivial subsets $M$, $N$ of a smooth Banach space $B$ such that
$T\colon M \to N$ and $\phi(u, Tu) >0, u \in M$. In such a case, it
is our objective to find an element $u_0 \in M$ such that $\phi(u_0,
Tu_0)$ attains its minimum value $\text{dist}_\phi(M, N)$. This
point $u_0$ is referred to as a $\phi$-best proximity point of the
mapping $T$ in $M$.

Researchers have developed and studied numerous iterative techniques
for approximating the common fixed points of relatively nonexpansive
mappings in Banach spaces. Nakajo and Takahashi \cite{NT} devised an
iterative approach for approximating fixed points of nonexpansive
mappings in a Hilbert space. Matsushita and Takahashi \cite{MT}
proposed a hybrid approach for establishing the strong convergence
theorem for a relatively nonexpansive mapping by using the
generalized projection operator in a Banach space. Later, Inoue {\it
et al}. \cite{ITZ} addressed an enhanced shrinking projection method
for relatively nonexpansive mappings in a Banach space. This
methodology is a modified version of the iterative approach provided
in a previous study \cite{TTK} for a Hilbert space. Since then,
significant improvements have been made in obtaining hybrid
projection techniques for approximating fixed points of various
types of nonexpansive mappings in both Hilbert and Banach spaces. To
gain more insight, one may consult \cite{ITZ, KT, CSW, MT, NT, TTK}.

Strong convergence theorems by hybrid methods for the best proximity point of a nonself nonexpansive mapping in Hilbert spaces were recently proposed by Jacob et al. \cite{GMV}. A shrinking projection approach was developed by Suparatulatorn et al. \cite{RS} to solve the iterative scheme in a Hilbert space, with the aim of making computation easier. Recently, Suparatulatorn et al. \cite{RWS} introduced the general Mann algorithm for nonself nonexpansive mappings and proved the convergence result in a Hilbert space. They improved upon previous results of \cite{RS} by utilizing an approximate best proximity point sequence for a mapping instead of the demi-closedness principle. All the above convergence theorems considered the P-property, which is superfluous in the context of a Hilbert space, and hybrid techniques can be complicated while solving numerically. Moreover, these results are limited to Hilbert spaces, which needed to be expanded to more general Banach spaces. Behera and Nahak \cite{PC} have recently devised a shrinking projection approach to ascertain $\phi$-best proximity points of a non-self nonextensive mapping in a Banach space and shown the strong convergence of the iteratively generated sequences under appropriate conditions. They precisely proved the following:
\begin{theorem} \label{alg-thm1} \cite{PC}
Let $(M, N)$ be a pair of nonempty, closed, convex subsets of a uniformly convex and uniformly smooth Banach space $B$ that satisfies the $\phi_p$-property. Let $T: M \to N$ be a non-self nonextensive mapping satisfying $T(M_0) \subseteq N_0$ and that $T$ has the $\phi$-proximal property. Let us consider the sequence $\{u_n\}$ generated by 
\begin{align*}
 \begin{cases}
    u_1=\Pi_{M_0} u, u \in B ~ \text{be arbitrary}, \\
    H_1=M_0, \\
    v_n=J^{-1}(\alpha_n J u_n +(1-\alpha_n) J \Pi_M Tu_n), n \in \mathbb{N},\\
    H_{n+1} = \{ z \in H_n : \phi(z, v_n) \leq \phi(z, u_n) \}, \\
    u_{n+1} = \Pi_{H_{n+1}} u, 
    \end{cases} \numberthis \label{Pi}
\end{align*}
for each $n \in \mathbb{N}$, where $\alpha_n \in [0, a]$, for some $a \in [0, 1)$. If $\text{Best}_M^{\phi}(T)$ is nonempty, then the sequence $\{u_n\}$ strongly converges to $u^*=\Pi_{\text{Best}_M^{\phi}(T)} u$.
\end{theorem}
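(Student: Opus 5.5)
The argument follows the standard shrinking projection scheme of Inoue et al.\ and Matsushita--Takahashi, transported to the proximal setting. Write $F=\text{Best}_M^{\phi}(T)$. From the $\phi$-proximal property and $T(M_0)\subseteq N_0$, I expect $F$ to be characterized as the set of $z\in M_0$ with $\Pi_M Tz=z$, so that $F$ consists of fixed points of $S:=\Pi_M T$ restricted to $M_0$.

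\textbf{Step 1: the scheme is well defined.} Each $H_n$ is closed and convex. Convexity holds because, by ($\phi$2), the inequality $\phi(z,v_n)\le\phi(z,u_n)$ is equivalent to
\[
2\langle z, Ju_n-Jv_n\rangle\le \|u_n\|^2-\|v_n\|^2,
\]
which is affine in $z$. Next, $F\subseteq H_n$ by induction. For $p\in F$, use convexity of $\|\cdot\|^2$ in the form $\phi(p,J^{-1}(\alpha Jx+(1-\alpha)Jy))\le\alpha\phi(p,x)+(1-\alpha)\phi(p,y)$. Then use $\phi(p,\Pi_M Tu_n)\le\phi(p,Tu_n)$; this follows from the three-point inequality for $\Pi_M$ applied with $p\in M$. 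Combine this with nonextensiveness, $\phi(Tp,Tu_n)\le\phi(p,u_n)$. Finally, transfer from $Tp$ to $p$ using the $\phi_p$-property together with $\phi(p,Tp)=\text{dist}_\phi(M,N)$. This gives $\phi(p,v_n)\le\phi(p,u_n)$. This transfer is the delicate point, and it is exactly what the $\phi_p$-property and the $\phi$-proximal property are designed to supply. Hence $H_n\supseteq F\neq\emptyset$, and $u_{n+1}=\Pi_{H_{n+1}}u$ exists.

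\textbf{Step 2: $\{u_n\}$ is Cauchy.} Since $H_{n+1}\subseteq H_n$ and $u_n=\Pi_{H_n}u$, we get $\phi(u_n,u)\le\phi(u_{n+1},u)\le\phi(p,u)$. So $\phi(u_n,u)$ is nondecreasing and bounded, and $\{u_n\}$ is bounded by ($\phi$1). The projection inequality gives
\[
\phi(u_m,u_n)\le\phi(u_m,u)-\phi(u_n,u)\to0 \quad (m>n).
\]
In a uniformly convex, uniformly smooth space, the standard lemma (if $\phi(x_n,y_n)\to0$ with bounded sequences, then $\|x_n-y_n\|\to0$) yields that $\{u_n\}$ is Cauchy. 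Thus $u_n\to u^*\in M_0$, where closedness of $M_0$ would need to be assumed or derived.

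\textbf{Step 3: identifying the limit.} Since $u_{n+1}\in H_{n+1}$, we have $\phi(u_{n+1},v_n)\le\phi(u_{n+1},u_n)\to0$, so $\|u_{n+1}-v_n\|\to0$ and hence $\|u_n-v_n\|\to0$. By uniform norm-to-norm continuity of $J$ on bounded sets, $\|Ju_n-Jv_n\|\to0$. Now $(1-\alpha_n)\|Ju_n-J\Pi_MTu_n\|=\|Ju_n-Jv_n\|$ and $1-\alpha_n\ge1-a>0$, so $\|Ju_n-J\Pi_MTu_n\|\to0$. Uniform continuity of $J^{-1}$ on bounded sets of $B^*$ then gives $\|u_n-\Pi_MTu_n\|\to0$. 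Continuity of $\Pi_M T$ requires care: $T$ is continuous in the $\phi$-sense via nonextensiveness, and $\Pi_M$ is continuous in uniformly convex, uniformly smooth spaces. From these I obtain $\Pi_MTu^*=u^*$, hence $u^*\in F$ by the characterization in Step 1.

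\textbf{Step 4: $u^*=\Pi_F u$.} Because $F\subseteq H_n$, we have $\phi(u_n,u)\le\phi(\Pi_F u,u)$. Passing to the limit gives $\phi(u^*,u)\le\phi(\Pi_F u,u)$, and uniqueness of the generalized projection yields $u^*=\Pi_F u$. I expect the main obstacle to be the proximal transfer in Step 1 together with the closure step in Step 3, that is, showing rigorously that $\phi$-best proximity points coincide with the fixed points of $\Pi_M T$ on $M_0$ under the $\phi_p$ and $\phi$-proximal properties.
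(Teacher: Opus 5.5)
There is a genuine gap in Step~1, at the inclusion $\text{Best}_M^{\phi}(T)\subseteq H_{n+1}$, and the chain you sketch cannot be completed. Once you bound $\phi(p,\Pi_MTu_n)$ by $\phi(p,Tu_n)$ via the three-point inequality, the estimate can no longer close. Since $p\in M$ and $Tu_n\in N$, we have $\phi(p,Tu_n)\ge\text{dist}_\phi(M,N)$. So any chain passing through $\phi(p,Tu_n)$ would force $\phi(p,Tu_n)\le\phi(p,u_n)$, and this fails as soon as $\phi(p,u_n)<\text{dist}_\phi(M,N)$, e.g.\ for $u_n$ near $p$. The three-point inequality discards exactly the term $\phi(\Pi_MTu_n,Tu_n)$ that matters.

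The missing idea is the hypothesis you never use in this step, namely $T(M_0)\subseteq N_0$. Since $u_n\in H_n\subseteq M_0$, you get $Tu_n\in N_0$, hence $\phi(\Pi_MTu_n,Tu_n)=\text{dist}_\phi(M,N)$ by Lemma~\ref{lem*}. Now $(p,Tp)$ and $(\Pi_MTu_n,Tu_n)$ are both pairs realizing $\text{dist}_\phi(M,N)$. The $\phi_p$-property then gives the \emph{equality} $\phi(p,\Pi_MTu_n)=\phi(Tp,Tu_n)$, and nonextensiveness bounds this by $\phi(p,u_n)$. Combined with your convexity estimate, this yields $\phi(p,v_n)\le\phi(p,u_n)$. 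This is exactly the mechanism of Step~1 in the paper's proof of Theorem~\ref{main}, where proximal weak suppressivity plays the role of ``$\phi_p$-property plus nonextensive''. The $\phi$-proximal property plays no role in this step.

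With Step~1 repaired, the rest is correct and takes a different route from the paper. The paper's argument for Theorem~\ref{main} proceeds as follows:
\begin{itemize}
\item it shows only $\phi(u_{n+1},u_n)\to0$ and derives that $\{u_n\}$ is an A$\phi$-BPS;
\item it extracts a weakly convergent subsequence and places the weak limit in $\text{Best}_M^{\phi}(T)$ via the $\phi$-proximal property;
\item it identifies that limit with the projection by weak lower semicontinuity, then upgrades to norm convergence via (KK).
\end{itemize}
You instead get a Cauchy sequence directly from $\phi(u_m,u_n)\le\phi(u_m,u)-\phi(u_n,u)$ (Lemma~\ref{imp3}) and Lemma~\ref{imp1}, and then identify the norm limit by continuity. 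This avoids weak compactness and (KK), and it shows the $\phi$-proximal hypothesis is superfluous for nonextensive $T$.

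The ``main obstacle'' you anticipate is not one. You need neither continuity of $\Pi_M$ nor the fixed-point characterization of $\text{Best}_M^{\phi}(T)$, for three reasons:
\begin{itemize}
\item $\Pi_MTu_n\to u^*$, from $\|u_n-\Pi_MTu_n\|\to0$;
\item $Tu_n\to Tu^*$, from nonextensiveness and Lemma~\ref{imp1};
\item $\phi(\Pi_MTu_n,Tu_n)=\text{dist}_\phi(M,N)$ for every $n$.
\end{itemize}
Joint norm continuity of $\phi$ then gives $\phi(u^*,Tu^*)=\text{dist}_\phi(M,N)$ with $u^*\in M$; even the characterization itself uses only Lemma~\ref{lem*} and uniqueness of $\Pi_M$. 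Finally, your derivation of $\|u_n-\Pi_MTu_n\|\to0$ from $Ju_n-Jv_n=(1-\alpha_n)(Ju_n-J\Pi_MTu_n)$ and uniform continuity of $J^{-1}$ on bounded sets is sound. It is more careful than the corresponding step in the paper, which tries to deduce $\phi(u_n,\Pi_MTu_n)\to0$ from an \emph{upper} bound on $\phi(u_n,v_n)$.
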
 

Motivated by the preceding research, our focus in this work is to construct a projection algorithm for finding the common $\phi$-best proximity points for a proximally weakly suppressive mapping. This enhances Theorem \ref{alg-thm1} by leaving out the $\phi_p$-property. We have also proved a weak convergence theorem for a sequence generated by a projection scheme for computing common $\phi$-best proximity points and solution to a variational problem. 

This article organizes its contents into three parts. After a brief
introduction in Section \ref{sec1}, Section \ref{sec2} outlines the
fundamental concepts. In this section, we establish the existence and convergence of $\phi$-best proximity points. We also demonstrate a result characterizing the property ($\phi$-UC) for a pair of nontrivial subsets in a Banach space. In Section \ref{sec3}, we prove the strong convergence result of the sequence generated by a shrinking projection algorithm involving a non-self proximally weakly suppressive mapping mapping to find the common $\phi$-best proximity points in the context of a uniformly convex Banach space that is also uniformly smooth. Finally, we prove a weak convergence result for a sequence generated by a projection algorithm for finding the $\phi$-best proximity point and variational problem. This extends and improves known findings in the literature.
\section{Preliminaries} \label{sec2}
Let $M$ and $N$ be two nontrivial, closed, and convex subsets of a
smooth Banach space $B$. In this study, we denote $M_0$ and $N_0$ as
follows:
\begin{align*}
& M_0=\{u \in M : \phi(u, v)=\text{dist}_\phi(M, N), ~ \text{for some} ~ v \in N \},\\
& N_0=\{v \in N : \phi(u, v)=\text{dist}_\phi(M, N), ~ \text{for some}~ u \in M \},
\end{align*}
where $\text{dist}_\phi(M, N) = \inf \{ \phi(u, v) : u \in M ~
\text{and} ~ v \in N \}$. The pair $(M_0, N_0)$ is referred to as
the $\phi$-best proximity pair associated with $(M,N)$.

Let us exemplify this concept in two special cases.

\begin{example}\rm
Let $B$ be a Hilbert space, and $M$, $N$ two nontrivial, closed, and
convex subsets of it. In this situation,
$$\text{dist}_\phi(M, N)=\inf \{\|u-v\|^2 : u \in M ~
\text{and} ~ v \in N \}.$$
\end{example}

\begin{example}\rm
Let $B$ be a smooth Banach space, $M$ a nontrivial, closed, and convex
subset of $B$, and $N=\{0\}$, the null element of $B$. Then it is
seeable that
$$\text{dist}_\phi(M, N)=\inf \{\|u\|^2 : u \in M\}.$$
\end{example}

We designate $\text{Best}_M^{\phi}(T)$ as the set of $\phi$-best
proximity points of $T$ on $M$, where
$$ \text{Best}_M^{\phi}(T)=\{u \in M : \phi(u, Tu)=\text{dist}_\phi(M, N) \}.
$$
It is easy to see that the set $\text{Best}_M^{\phi}(T)$ is contained in $M_0$.

The following findings are essential for exhibiting the main theorems that will be presented in subsequent sections.

\begin{lemma} [Kamimura and Takahashi \cite{KT}] \label{imp1}
Let $B$ be a uniformly convex, and smooth Banach space, and let
$\{u_n\}$ and $\{v_n\}$ be two sequences in $B$. If $\phi(u_n, v_n)
\to 0$, and either $\{u_n\}$ or $\{v_n\}$ is bounded, then $\|u_n -
v_n \| \to 0$ as $n \to \infty$.
\end{lemma}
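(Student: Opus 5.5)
The plan is to reduce first to the case where $\{v_n\}$ is bounded, and then use the uniform convexity of $\|\cdot\|^2$ on bounded sets. For the reduction, suppose $\{u_n\}$ is bounded. By ($\phi$1) we have $(\|u_n\|-\|v_n\|)^2\le \phi(u_n,v_n)\to 0$, so $\|v_n\| \le \|u_n\| + \sqrt{\phi(u_n,v_n)}$ is bounded. The argument is symmetric, so in either case both sequences are bounded, say by $r>0$. Also from ($\phi$1), $\|u_n\|-\|v_n\|\to 0$.

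Next I would expand $\phi$ to reach a quantity controlled by uniform convexity. Write
\[
\phi(u_n,v_n)=\|u_n\|^2-2\langle u_n,Jv_n\rangle+\|v_n\|^2 .
\]
Since $\|v_n\|^2-\|u_n\|^2\to 0$, this gives
\[
2\|v_n\|^2-2\langle u_n,Jv_n\rangle\to 0 .
\]
Note that $\langle v_n - u_n, Jv_n\rangle = \|v_n\|^2-\langle u_n,Jv_n\rangle\to 0$.

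The key tool, and the main obstacle, is Xu's characterization of uniform convexity on bounded sets. It states that there is a continuous, strictly increasing, convex function $g\colon[0,\infty)\to[0,\infty)$ with $g(0)=0$ such that
\[
\|x+y\|^2\ge \|x\|^2+2\langle y,j\rangle+g(\|y\|)
\]
for all $x,y$ in the ball of radius $2r$ and all $j\in Jx$. I would cite this inequality, or reprove a version of it from the modulus of convexity $\delta$, which is the fiddly step. Apply it with $x=v_n$ and $y=u_n-v_n$:
\[
\|u_n\|^2\ge \|v_n\|^2+2\langle u_n-v_n,Jv_n\rangle+g(\|u_n-v_n\|).
\]
Rearranging,
\[
g(\|u_n-v_n\|)\le \|u_n\|^2-\|v_n\|^2+2\langle v_n-u_n,Jv_n\rangle\to 0 .
\]
Because $g$ is strictly increasing and continuous with $g(0)=0$, it follows that $\|u_n-v_n\|\to0$.

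If I want to avoid Xu's inequality, I would argue by contradiction. Suppose that along a subsequence $\|u_n-v_n\|\ge\varepsilon$. Pass to a further subsequence with $\|v_n\|\to c$, so also $\|u_n\|\to c$. The case $c=0$ is trivial. If $c>0$, normalize to $\tilde u_n = u_n/\|u_n\|$ and $\tilde v_n = v_n/\|v_n\|$. Then $\langle \tilde u_n,J\tilde v_n\rangle/\|\tilde v_n\| \to 1$, which forces $\|\tfrac{\tilde u_n+\tilde v_n}{2}\|\to1$ via $\|\tfrac{\tilde u_n+\tilde v_n}{2}\|\ge \tfrac12\langle \tilde u_n+\tilde v_n, J\tilde v_n\rangle/\|\tilde v_n\|$. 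Uniform convexity then yields $\|\tilde u_n-\tilde v_n\|\to0$, hence $\|u_n-v_n\|\to0$, a contradiction. I would present this elementary route as the main argument, with the subsequence bookkeeping as the only delicate point.
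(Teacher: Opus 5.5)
Your proposal is correct. The paper gives no proof of this lemma; it is quoted from Kamimura and Takahashi, and you take a genuinely different route from theirs.

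\textbf{The Kamimura--Takahashi argument.} By ($\phi$1), both sequences are bounded, say by $r$. Their estimate $g(\|u-v\|)\le\phi(u,v)$ on $B_r$, stated in the paper as Lemma~\ref{imp5}, then finishes at once. They obtain that estimate in three steps:
\begin{enumerate}
\item Apply Xu's convex-combination inequality (Lemma~\ref{imp4}) to $v+\lambda(u-v)$.
\item Combine it with the gradient inequality $\|v\|^2+2\lambda\langle u-v,Jv\rangle\le\|v+\lambda(u-v)\|^2$.
\item Divide by $\lambda$ and let $\lambda\downarrow 0$.
\end{enumerate}

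\textbf{Your first route.} This is their argument in substance. The right-hand side of your final inequality, $\|u_n\|^2-\|v_n\|^2+2\langle v_n-u_n,Jv_n\rangle$, is identically equal to $\phi(u_n,v_n)$. So you have in fact rederived Lemma~\ref{imp5} from the other form of Xu's characterization, and your separate computation that this quantity tends to $0$ is unnecessary.

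\textbf{Your main, elementary route.} This is the genuinely different part. It replaces the construction of the gauge $g$, which is the nontrivial part of Xu's theorem, with three ingredients: normalization, the homogeneity $J(tv)=tJv$, and the bare definition of the modulus of convexity. It checks out. Since $\|J\tilde v_n\|=1$, the bound $\|\tfrac{\tilde u_n+\tilde v_n}{2}\|\ge\tfrac12\bigl(1+\langle\tilde u_n,J\tilde v_n\rangle\bigr)\to 1$ is right. The division by $\|\tilde v_n\|$ in your formulas is vacuous. You should write out the last step explicitly:
\[
\|u_n-v_n\|\le\|u_n\|\,\|\tilde u_n-\tilde v_n\|+\bigl|\|u_n\|-\|v_n\|\bigr|.
\]

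\textbf{What each route buys.} Your elementary route is fully self-contained, but it yields only the qualitative sequential statement. The Kamimura--Takahashi route costs more machinery, but it gives the uniform quantitative bound $g(\|u-v\|)\le\phi(u,v)$ on bounded sets. That estimate is reusable, which is why the paper records it separately as Lemma~\ref{imp5}.
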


\begin{lemma} [Alber \cite{A}, Alber and Reich \cite{AR}, Kamimura and Takahashi \cite{KT}] \label{imp2}
Let $K$ be a nontrivial, closed, and convex subset of a smooth Banach
space $B$ and $u \in B$. Then $u_0=\Pi_K u$ if and only if $$
\langle u_0 - v, Ju - Ju_0 \rangle  \geq 0,~ \text{for all} ~ v \in
K.$$
\end{lemma}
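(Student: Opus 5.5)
We need to prove Lemma \ref{imp2}: for $K$ nonempty closed convex in a smooth Banach space and $u\in B$, $u_0=\Pi_K u$ if and only if $\langle u_0-v, Ju-Ju_0\rangle\ge 0$ for all $v\in K$. As the paper does when it defines $\Pi_K$, we assume $\Pi_K u$ exists, for instance because $B$ is also strictly convex and reflexive. The plan is to read this as the first-order optimality condition for the convex function $g(v)=\phi(v,u)=\|v\|^2-2\langle v,Ju\rangle+\|u\|^2$ on $K$. The key fact is that in a smooth space the map $v\mapsto\|v\|^2$ is Gateaux differentiable with derivative $2Jv$.

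For the forward direction, let $u_0=\Pi_K u$, take $v\in K$ and $t\in(0,1)$, and set $v_t=u_0+t(v-u_0)\in K$ by convexity. Minimality gives $\phi(v_t,u)-\phi(u_0,u)\ge 0$. Expanding, this says
\[\|v_t\|^2-\|u_0\|^2-2t\langle v-u_0,Ju\rangle\ge 0.\]
Divide by $t$ and let $t\downarrow 0$. By smoothness, $(\|u_0+t(v-u_0)\|^2-\|u_0\|^2)/t\to 2\langle v-u_0,Ju_0\rangle$. We obtain $2\langle v-u_0,Ju_0-Ju\rangle\ge 0$, which is exactly the stated inequality.

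For the converse, assume the inequality holds for all $v\in K$ and fix $v\in K$. Apply identity ($\phi$2) with the triple $(v,u,u_0)$:
\[\phi(v,u)=\phi(v,u_0)+\phi(u_0,u)+2\langle v-u_0,Ju_0-Ju\rangle.\]
The last term equals $2\langle u_0-v,Ju-Ju_0\rangle\ge 0$, and $\phi(v,u_0)\ge 0$ by ($\phi$1). Hence $\phi(v,u)\ge\phi(u_0,u)$. So $u_0$ minimizes $\phi(\cdot,u)$ over $K$, and $u_0=\Pi_K u$ because the minimizer is unique under the standing hypotheses.

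The main obstacle is justifying the derivative of $\|\cdot\|^2$. We need $\frac{d}{dt}\|x+ty\|^2\big|_{t=0}=2\langle y,Jx\rangle$ in a smooth space. For $x\ne0$, the chain rule applied to the Gateaux derivative of the norm gives $2\|x\|\,\langle y,Jx/\|x\|\rangle$, since $Jx/\|x\|$ is the unique norming functional. For $x=0$ the difference quotient is $t\|y\|^2\to0=2\langle y,J0\rangle$. One could avoid derivatives altogether by using the subgradient inequality $\|w\|^2-\|x\|^2\ge 2\langle w-x,Jx\rangle$, but that gives an inequality in the wrong direction for the forward implication. So the limit argument is needed.
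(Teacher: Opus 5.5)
Your proof is correct. The paper does not prove this lemma at all; it is quoted from Alber, Alber--Reich and Kamimura--Takahashi. Your argument is the standard one. The forward direction is the first-order optimality condition for the convex, G\^ateaux differentiable function $\phi(\cdot,u)$ on $K$. The converse via ($\phi$2) is the usual argument, since $\phi(v,u_0)\ge 0$ is exactly the subgradient inequality $\|v\|^2-\|u_0\|^2\ge 2\langle v-u_0,Ju_0\rangle$.

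Your closing remark is slightly too strong. The subgradient inequality points the wrong way only when applied at $u_0$. Applied at the moving point $v_t$, it gives $\|v_t\|^2-\|u_0\|^2\le 2t\langle v-u_0,Jv_t\rangle$. Letting $t\downarrow 0$ and using the norm-to-weak$^*$ continuity of $J$ on smooth spaces then yields the forward direction without differentiating the norm. This is still a limit argument, just a different one.

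You should make explicit that the ``if'' direction needs $u_0\in K$; you use it when you say $u_0$ minimizes $\phi(\cdot,u)$ over $K$. The printed statement omits this hypothesis, and it cannot be dropped. In $\mathbb{R}$ with $K=[0,1]$, $u=2$, $u_0=3/2$, the inequality holds for every $v\in K$, yet $\Pi_K u=1$.

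Likewise, as you noted, identifying the minimizer with $\Pi_K u$ uses strict convexity (and reflexivity for existence). The lemma's bare ``smooth'' hypothesis does not supply these.
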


\begin{lemma} [Alber \cite{A}, Kamimura and Takahashi \cite{KT}] \label{imp3}
Let $B$ be a reflexive, strictly convex, and smooth Banach space and
let $K$ be a nontrivial, closed, and convex subset of $B$ and $u \in
B$. Then
$$ \phi(v, \Pi_K u) + \phi(\Pi_K u, u) \leq \phi(v, u),\,\, \mbox{for all}\,\, v \in K. $$
\end{lemma}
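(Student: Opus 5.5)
We prove the inequality $\phi(v,\Pi_K u)+\phi(\Pi_K u,u)\le\phi(v,u)$ for all $v\in K$ by combining the three-point identity ($\phi$2) with the variational characterization of the generalized projection in Lemma \ref{imp2}. Write $u_0=\Pi_K u$. This element exists and is unique because $B$ is reflexive, strictly convex and smooth, and $K$ is nontrivial, closed and convex, as recalled in the introduction. Fix an arbitrary $v\in K$.

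The first step is to apply ($\phi$2) with the intermediate point $z=u_0$:
\begin{align*}
\phi(v,u)=\phi(v,u_0)+\phi(u_0,u)+2\langle v-u_0, Ju_0-Ju\rangle .
\end{align*}
It is worth checking this identity directly from the definition (\ref{phi}) by expanding both sides. The terms $\Vert v\Vert^2$ and $\Vert u\Vert^2$ match on both sides. The term $\Vert u_0\Vert^2$ appears on the right as $\Vert u_0\Vert^2+\Vert u_0\Vert^2-2\langle u_0,Ju_0\rangle=0$, so it cancels. The cross terms $-2\langle v,Ju_0\rangle-2\langle u_0,Ju\rangle+2\langle v-u_0,Ju_0-Ju\rangle$ reduce to $-2\langle v,Ju\rangle$. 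So the identity holds as stated.

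The second step is to handle the cross term. We rewrite it as $2\langle v-u_0,Ju_0-Ju\rangle=2\langle u_0-v,Ju-Ju_0\rangle$. By Lemma \ref{imp2}, since $u_0=\Pi_K u$ and $v\in K$, we have $\langle u_0-v,Ju-Ju_0\rangle\ge 0$. Dropping this nonnegative term from the identity gives the claimed inequality.

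The only place needing care is the sign bookkeeping in ($\phi$2) relative to the form of Lemma \ref{imp2}; both are fixed by the direct expansion in the first step. The hypotheses of reflexivity and strict convexity are used only to guarantee that $\Pi_K u$ exists and is well defined. Smoothness makes $J$ single-valued, so that $\phi$ and Lemma \ref{imp2} apply.
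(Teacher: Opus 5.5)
Your argument is correct and is the standard proof from the cited sources; the paper itself only quotes this lemma and gives no proof. That proof applies ($\phi$2) with $z=\Pi_K u$ and then drops the cross term $2\langle v-u_0,Ju_0-Ju\rangle=2\langle u_0-v,Ju-Ju_0\rangle\geq 0$, which is nonnegative by Lemma \ref{imp2}. One cosmetic slip in your expansion check: you use $-2\langle u_0,Ju_0\rangle$ to cancel the two copies of $\Vert u_0\Vert^2$ and then count it again inside the full cross term, so your tally of cross terms should omit that piece; the identity itself is right.
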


\begin{lemma} [Xu \cite{X}, Z\'alinescu \cite{Z}] \label{imp4}
Let $B$ be a uniformly convex Banach space, and let $r > 0$. Then
there exists a strictly increasing, continuous and convex function
$\psi\colon [0,2r] \to [0, \infty)$ with $\psi(0)=0$ and
\begin{align*}
\Vert \alpha u + (1-\alpha) v \Vert^2 \leq \alpha \Vert u \Vert^2 +(1-\alpha) \Vert v \Vert^2 - \alpha (1-\alpha) \psi (\Vert u - v \Vert), \numberthis \label{n1}
\end{align*}
for all $u$, $v \in B_r=\{ z \in B : \Vert z \Vert \leq r \}$ and
$\alpha \in [0, 1]$.
\end{lemma}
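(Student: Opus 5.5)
Since Lemma \ref{imp4} is a cited result (Xu, Z\'alinescu), the plan is to reprove it from the modulus of convexity via the standard Xu construction.

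\emph{Step 1: reduction to a one-variable modulus.} Define, for $t\in[0,2r]$,
$$
g(t)=\inf\Big\{\tfrac{\alpha\Vert u\Vert^2+(1-\alpha)\Vert v\Vert^2-\Vert \alpha u+(1-\alpha)v\Vert^2}{\alpha(1-\alpha)} : u,v\in B_r,\ \Vert u-v\Vert\ge t,\ \alpha\in(0,1)\Big\}.
$$
By construction $g(0)=0$, $g\ge 0$ (convexity of $\Vert\cdot\Vert^2$), $g$ is nondecreasing, and \eqref{n1} holds with $g$ in place of $\psi$. The cases $\alpha\in\{0,1\}$ are trivial.

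\emph{Step 2: positivity.} I will show $g(t)>0$ for $t>0$. This is where uniform convexity enters and is the main obstacle. The obstacle is uniformity in $\alpha$ near $0$ or $1$, where the factor $\alpha(1-\alpha)$ degenerates.

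For $\alpha\in[1/4,3/4]$, uniform convexity gives $\Vert \tfrac{u+v}{2}\Vert^2\le \tfrac12\Vert u\Vert^2+\tfrac12\Vert v\Vert^2-c(t)$ with $c(t)>0$ depending only on $t$ and $r$. This uses the modulus $\delta$ after scaling, splitting into the cases $\big|\Vert u\Vert-\Vert v\Vert\big|\ge t/4$ and its complement.

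For general $\alpha$, I use the identity that along the segment $\varphi(\alpha)=\Vert\alpha u+(1-\alpha)v\Vert^2$ is convex. A midpoint gain $c(t)$ on any subsegment of length comparable to $t$ then propagates to a gain of order $\alpha(1-\alpha)c'(t)$. To make this precise I would use the three-point convexity estimate: for a convex function on $[0,1]$ lying below its chord, a strict midpoint defect at the middle forces a defect of at least $2\min(\alpha,1-\alpha)$ times that amount at $\alpha$. This handles the degenerate regime.

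\emph{Step 3: regularize.} Set $\psi(t)=\int_0^t \tilde g(s)\,ds$ suitably normalized, where $\tilde g$ is the lower convex envelope of $g$ restricted so it remains positive. For example, take
$$
\psi(t)=\frac{1}{2r}\int_0^t g(s)\,ds\cdot\frac{t}{2r}
$$
or simply $\psi(t)=\int_0^t \big(\tfrac1s\int_0^s g\big)ds$-type averaging. Because $g$ is nondecreasing and positive on $(0,2r]$, the function $t\mapsto\int_0^t g$ is convex, continuous, strictly increasing, vanishes at $0$, and is at most $t\,g(t)\le 2r\,g(t)$. Therefore $\psi(t):=\frac{1}{2r}\int_0^t g(s)\,ds\le g(t)$ has all the required properties, and \eqref{n1} follows from Step 1.
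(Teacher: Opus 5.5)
The paper does not prove this lemma; it quotes it from Xu and Z\'alinescu, so there is no in-paper argument to match. Your proof is correct in substance, up to one repairable slip described below. It is essentially the modulus-of-uniform-convexity route for the function $\Vert\cdot\Vert^2$ on $B_r$, which is Z\'alinescu's viewpoint. You take the optimal modulus $g$, show $g>0$ by a midpoint estimate plus concavity of the chord defect, and then pass to a convex minorant. Xu instead works with a duality-map (subgradient) lower estimate for $\Vert z+w\Vert^2-\Vert z\Vert^2-2\langle w,j\rangle$, $j\in Jz$, expressed through the modulus of convexity, applied at $z=\alpha u+(1-\alpha)v$. That route gives an explicit $\psi$ built from $\delta$ and covers the $\Vert\cdot\Vert^p$ version. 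Yours needs no duality mapping, only one-variable convexity, and it produces the best possible $g$ before convexifying.

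The slip is in the midpoint estimate: the fixed threshold $t/4$ does not work as stated. With $a=\Vert u\Vert\ge b=\Vert v\Vert$ and $m=\Vert\frac{u+v}{2}\Vert$ you have
\[
\tfrac12 a^2+\tfrac12 b^2-m^2=\tfrac14(a-b)^2+\Big(\tfrac{a+b}{2}-m\Big)\Big(\tfrac{a+b}{2}+m\Big).
\]
Scaling by $a$ (note $a\ge t/2$), with $\delta$ the modulus over the unit ball, gives only $\frac{a+b}{2}-m\ge \frac t2\,\delta(t/r)-\frac{a-b}{2}$. For $a-b$ close to $t/4$ this bound is negative whenever $\delta(t/r)<1/4$, which always happens for small $t$. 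Split instead at $a-b=\frac t2\,\delta(t/r)$. Above this threshold the first term gives at least $t^2\delta(t/r)^2/16$. Below it, the second term is at least $\frac t4\delta(t/r)\cdot\frac t4$. So $c(t)=t^2\delta(t/r)^2/16$ works.

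Two further simplifications:
\begin{enumerate}
\item The detour through $\alpha\in[\frac14,\frac34]$ and subsegments is unnecessary. The function $h(\alpha)=\alpha a^2+(1-\alpha)b^2-\Vert\alpha u+(1-\alpha)v\Vert^2$ is concave with $h(0)=h(1)=0$. Hence $h(\alpha)\ge 2\min(\alpha,1-\alpha)h(\frac12)\ge 2\alpha(1-\alpha)c(t)$, that is, $g(t)\ge 2c(t)$ directly.
\item In Step 3, drop the tentative formulas and keep $\psi(t)=\frac1{2r}\int_0^t g$. Add that $g\le g(2r)\le 4r^2$ (take $u=re$, $v=-re$ with $e\in S_B$ and $\alpha=\frac12$), so $\psi$ is finite and Lipschitz. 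Your checks of convexity, strict monotonicity and $\psi(t)\le\frac{t}{2r}g(t)\le g(t)$ are then correct, and \eqref{n1} follows from Step 1.
\end{enumerate}
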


\begin{lemma} [Kamimura and Takahashi \cite{KT}] \label{imp5}
Let $r > 0$ and $B$ be a smooth and uniformly convex Banach space.
Then, there exists a strictly increasing, continuous and convex
function $ \psi \colon [0,2r] \to [0, \infty)$ with $\psi(0)=0$, and
\begin{align*}
\psi(\Vert u-v \Vert) \leq \phi(u, v), \numberthis  \label{n2}
\end{align*}
for all $u$, $v \in B_r$.
\end{lemma}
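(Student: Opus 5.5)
Lemma \ref{imp5} follows from Lemma \ref{imp4}, applied with the radius $r$ replaced by a larger one. The idea is to write $\phi(u,v)$ as a difference of a convex combination of squared norms and the squared norm of that combination, and then let the combination parameter degenerate.

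\emph{Step 1 (a variational identity).} Fix $u,v\in B_r$ and $\alpha\in(0,1)$, and set $w=\alpha u+(1-\alpha)v$. Smoothness and the subgradient inequality for $\frac12\|\cdot\|^2$ give $\|w\|^2 \ge \|v\|^2 + 2\langle w-v, Jv\rangle = \|v\|^2+2\alpha\langle u-v,Jv\rangle$. Combine this with Lemma \ref{imp4}, applied to the pair $u,v\in B_r$ with modulus $\psi_0$ on $[0,2r]$:
\begin{align*}
\|v\|^2+2\alpha\langle u-v,Jv\rangle \le \alpha\|u\|^2+(1-\alpha)\|v\|^2-\alpha(1-\alpha)\psi_0(\|u-v\|).
\end{align*}
Rearranging, the $\|v\|^2$ terms collapse to $-\alpha\|v\|^2$, and the inequality becomes $\alpha(1-\alpha)\psi_0(\|u-v\|)\le \alpha\big(\|u\|^2-\|v\|^2-2\langle u-v,Jv\rangle\big)$. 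Since $\langle v,Jv\rangle=\|v\|^2$, the bracket equals
\begin{align*}
\|u\|^2-2\langle u,Jv\rangle+\|v\|^2=\phi(u,v).
\end{align*}

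\emph{Step 2 (limit).} Divide by $\alpha>0$ to get $(1-\alpha)\psi_0(\|u-v\|)\le\phi(u,v)$, and let $\alpha\to0^+$. This yields $\psi_0(\|u-v\|)\le\phi(u,v)$, so $\psi=\psi_0$ works. It is strictly increasing, continuous, convex, vanishes at $0$, and is defined on $[0,2r]$, which contains $\|u-v\|$ because $u,v\in B_r$.

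The only delicate point is the subgradient inequality $\|w\|^2-\|v\|^2\ge 2\langle w-v,Jv\rangle$. It holds because $J$ is the (Gâteaux) derivative of $\frac12\|\cdot\|^2$ at $v$ in a smooth space and $\frac12\|\cdot\|^2$ is convex. Equivalently, it follows directly from $2\langle w,Jv\rangle\le 2\|w\|\|v\|\le\|w\|^2+\|v\|^2$, which needs no smoothness beyond $J$ being single-valued so that $\phi$ is well defined. Otherwise the argument is routine bookkeeping.
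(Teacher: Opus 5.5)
Your proof is correct. It is essentially the standard Kamimura--Takahashi argument, which the paper cites without reproducing: combine the uniform-convexity inequality of Lemma~\ref{imp4} with a first-order bound for $\Vert\cdot\Vert^2$ at $v$, then let $\alpha\to0^+$. The usual version passes to the limit in the difference quotient $\bigl(\Vert v+\alpha(u-v)\Vert^2-\Vert v\Vert^2\bigr)/\alpha$ using smoothness. Your subgradient inequality instead holds for each fixed $\alpha$, so you need smoothness only to make $J$, and hence $\phi$, single-valued.

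One cosmetic inconsistency: your opening sentence says the radius must be enlarged, but Step 1 correctly applies Lemma~\ref{imp4} with the same $r$. That is enough, because only $u,v\in B_r$ enter the argument.
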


\begin{definition} [$\phi_p$-property] \cite{PC}
Let $(M, N)$ be a pair of non-empty subsets of a smooth Banach space
$B$, with $M_0$ non-empty. Then $(M, N)$ is said to have the
$\phi_p$-property if and only if the implication
$$
 \left.\begin{aligned}
  \phi(u_1, v_1)=\text{dist}_\phi(M, N) \\
  \phi(u_2, v_2)=\text{dist}_\phi(M, N)\\ 
 \end{aligned}\right\}\Rightarrow \phi(u_1, u_2)=\phi(v_1, v_2),
$$
holds  for $u_1$, $u_2 \in M_0$ and $v_1$, $v_2 \in N_0$.
\end{definition}

We now state the following results exhibiting the existence of $\phi$-best proximity pairs $(M_0, N_0)$.

\begin{proposition}
Let $(M, N)$ be a pair of nontrivial, compact and convex subsets of a Fr\'echet smooth Banach space $B$. Then the pair $(M_0, N_0)$ is nontrivial. 
\end{proposition}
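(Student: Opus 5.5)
Since $M_0\subseteq M$ and $N_0\subseteq N$ are defined through attainment of $\text{dist}_\phi(M,N)$, it suffices to show that the infimum
$$\text{dist}_\phi(M,N)=\inf\{\phi(u,v): u\in M,\ v\in N\}$$
is attained at some pair $(u_0,v_0)\in M\times N$. Then $u_0\in M_0$ and $v_0\in N_0$, so both sets are nonempty. The natural route is a Weierstrass-type argument: $M\times N$ is compact in the product norm topology, so it is enough to show that $\phi\colon B\times B\to\mathbb{R}$ is (jointly) continuous, or at least lower semicontinuous, in the norm topology.

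For continuity, write $\phi(u,v)=\Vert u\Vert^2-2\langle u,Jv\rangle+\Vert v\Vert^2$. The terms $\Vert u\Vert^2$ and $\Vert v\Vert^2$ are clearly continuous. For the middle term, I use that in a Fr\'echet smooth Banach space the norm is Fr\'echet differentiable away from $0$. Hence the duality mapping $J$, which is $\frac12$ times the derivative of $\Vert\cdot\Vert^2$, is single-valued and norm-to-norm continuous. At $v=0$, continuity follows directly from $\Vert Jv\Vert=\Vert v\Vert$. This is the standard \v{S}mulian-type fact: Fr\'echet differentiability of the norm is equivalent to norm-to-norm continuity of the duality map.

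Given this, suppose $u_n\to u$ and $v_n\to v$. Then
$$|\langle u_n,Jv_n\rangle-\langle u,Jv\rangle|\le \Vert u_n-u\Vert\,\Vert v_n\Vert+\Vert u\Vert\,\Vert Jv_n-Jv\Vert\to 0,$$
so $\phi$ is jointly norm-continuous. Take a minimizing sequence $(u_n,v_n)\in M\times N$ with $\phi(u_n,v_n)\to\text{dist}_\phi(M,N)$. By compactness, pass to a subsequence with $u_{n_k}\to u_0\in M$ and $v_{n_k}\to v_0\in N$, using that $M$ and $N$ are closed. Continuity then gives $\phi(u_0,v_0)=\text{dist}_\phi(M,N)$. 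Note that the infimum is finite and nonnegative by ($\phi$1), so this value is a genuine real number.

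The only nontrivial step is the norm-to-norm continuity of $J$ under Fr\'echet smoothness. I would cite this as standard (e.g.\ from \cite{VT} or \cite{J}) rather than reprove it. The smoothness hypothesis is used essentially here: for merely G\^ateaux smooth spaces $J$ is only norm-to-weak$^*$ continuous, which would not suffice for the joint continuity of $\langle u,Jv\rangle$ along the minimizing sequence. Convexity of $M$ and $N$ is not needed for this existence argument.
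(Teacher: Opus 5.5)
Your proof is correct and follows essentially the paper's route: a minimizing sequence, compactness to extract norm-convergent subsequences, and continuity of $J$ to pass to the limit. The only difference is that the paper handles the limit through the three-point identity ($\phi$2), whereas you prove joint continuity of $\phi$ directly. One correction to your closing remark: norm-to-weak$^*$ continuity of $J$, which holds in every smooth space, already suffices, because $\langle u_n,Jv_n\rangle-\langle u,Jv\rangle=\langle u_n-u,Jv_n\rangle+\langle u,Jv_n-Jv\rangle$ and $u$ is fixed in the last term, so Fr\'echet smoothness is not actually essential here.
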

\begin{proof}
Let $\{u_n\}$ and $\{v_n\}$ be two sequences in $M$ and $N$, respectively such that $\phi(u_n, v_n)=\text{dist}_\phi(M, N)$. Now, as $(M, N)$ is a compact pair, there exists subsequence $\{u_{n_k}\}, \{v_{n_k}\}$ of $\{u_n\}, \{v_n\}$, respectively, such that $u_{n_k} \to u_0$ and $v_{n_k} \to v_0$. Then, 
\begin{align*}
\phi(u_0, v_0) & = \phi(u_0, u_{n_k})+\phi(u_{n_k}, v_{n_k})+\phi(v_{n_k}, v_0)+2 \langle u_0-u_{n_k}, Ju_{n_k}-Jv_0 \rangle \\
& \quad + 2 \langle u_{n_k}-v_{n_k}, Jv_{n_k}-Jv_0 \rangle .
\end{align*}
Using the fact that $B$ is Fr\'echet smooth, by taking limit $n$ tending to infinity, we deduce that $\phi(u_0, v_0)=\text{dist}_\phi(M, N)$. This proves the non-trivialness of the pair $(M_0, N_0)$.
\end{proof}
\begin{corollary}
Let $(M, N)$ be a pair of nontrivial, closed, convex subsets of a reflexive, Fr\'echet smooth Banach space $B$ with $M$ being bounded and $N$ being compact. Then the pair $(M_0, N_0)$ is nontrivial. 
\end{corollary}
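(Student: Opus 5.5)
Since $N$ is compact but $M$ is only bounded, I would replace the compactness of $M$ used in the preceding proposition by weak compactness, and use the compactness of $N$ to recover strong convergence on the side where the duality map enters nonlinearly. The aim is a pair $(u_0,v_0)\in M\times N$ with $\phi(u_0,v_0)=\text{dist}_\phi(M,N)$; then $u_0\in M_0$ and $v_0\in N_0$.

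First I take a minimizing sequence: $u_n\in M$, $v_n\in N$ with $\phi(u_n,v_n)\to \text{dist}_\phi(M,N)$. The proposition's wording says ``$=$'', but the limit version is what is actually available. Because $N$ is compact, I pass to a subsequence with $v_{n_k}\to v_0\in N$. Because $M$ is bounded, closed and convex in the reflexive space $B$, it is weakly compact, so a further subsequence gives $u_{n_k}\xrightarrow{w} u_0\in M$ (Mazur's theorem gives weak closedness of $M$).

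Next I write
$$\phi(u_{n_k},v_{n_k})=\Vert u_{n_k}\Vert^2-2\langle u_{n_k},Jv_{n_k}\rangle+\Vert v_{n_k}\Vert^2.$$
Since $B$ is Fr\'echet smooth, $J$ is norm-to-norm continuous, so $Jv_{n_k}\to Jv_0$ in norm. Combined with $u_{n_k}\xrightarrow{w}u_0$ and the boundedness of $\{u_{n_k}\}$, this gives
$$\langle u_{n_k},Jv_{n_k}\rangle\to\langle u_0,Jv_0\rangle,$$
by splitting the pairing as $\langle u_{n_k},Jv_{n_k}-Jv_0\rangle+\langle u_{n_k},Jv_0\rangle$. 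In addition, $\Vert v_{n_k}\Vert^2\to\Vert v_0\Vert^2$, and weak lower semicontinuity of the norm gives $\Vert u_0\Vert^2\le\liminf\Vert u_{n_k}\Vert^2$. Therefore
$$\phi(u_0,v_0)\le\liminf_k\phi(u_{n_k},v_{n_k})=\text{dist}_\phi(M,N).$$
The reverse inequality is immediate from the definition of the infimum, so equality holds and the pair $(M_0,N_0)$ is nontrivial.

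The main obstacle is the mixed term $\langle u_{n_k},Jv_{n_k}\rangle$. It is a product of two sequences, and one of them converges only weakly. This is exactly why compactness of $N$ together with norm continuity of $J$ (from Fr\'echet smoothness) is needed, rather than weak sequential continuity of $J$. The convexity of $\Vert\cdot\Vert^2$, through weak lower semicontinuity, handles the only remaining term that is not continuous under weak convergence.
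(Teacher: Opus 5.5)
Your proof is correct. The paper gives no separate argument for this corollary and presents it as a consequence of the preceding proposition for compact pairs. That proposition's proof, however, uses norm convergence $u_{n_k}\to u_0$: its decomposition contains $\phi(u_0,u_{n_k})$ and $\langle u_0-u_{n_k},Ju_{n_k}-Jv_0\rangle$, and these need not vanish when $u_{n_k}$ converges only weakly. A merely bounded $M$ in a reflexive space gives only weak convergence, so the corollary does not follow from the proposition verbatim.

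Your modification is exactly what is needed. You extract a weakly convergent subsequence in $M$ (reflexivity plus Mazur). You then use compactness of $N$ and the norm-to-norm continuity of $J$ from Fr\'echet smoothness to pass to the limit in the cross term $\langle u_{n_k},Jv_{n_k}\rangle$, and weak lower semicontinuity of the norm to handle $\Vert u_{n_k}\Vert^2$. This is the same mechanism the paper itself uses later, in its lower-semicontinuity estimate proving that a convex weakly compact $M$ and a compact $N$ satisfy property ($\phi$-UC). You were also right to start from a minimizing sequence with $\phi(u_n,v_n)\to\text{dist}_\phi(M,N)$ rather than the ``$=$'' written in the proposition.
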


\begin{lemma} \label{lem*}
 Let $(M, N)$ be a pair of nontrivial, closed, and convex
subsets of a smooth Banach space $B$. Then $\phi(\Pi_M v, v) = {\rm
dist}_\phi(M, N)$, for all $v \in N_0 $.
\end{lemma}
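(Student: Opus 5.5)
Fix $v \in N_0$. The proof is a squeeze between two inequalities. By the definition of $N_0$ there is some $u \in M$ with $\phi(u, v) = \text{dist}_\phi(M, N)$. We compare this $u$ with the generalized projection $\Pi_M v$. The projection is well defined as soon as $B$ is also reflexive and strictly convex, which is the standing setting whenever $\Pi_M$ is used in the paper; we will assume this implicitly, as the statement does.

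\textbf{Upper bound.} By the definition of $\Pi_M v$ as the minimizer of $\phi(\cdot, v)$ over $M$, and since $u \in M$,
$$\phi(\Pi_M v, v) = \inf_{w \in M} \phi(w, v) \le \phi(u, v) = \text{dist}_\phi(M, N).$$

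\textbf{Lower bound.} We have $\Pi_M v \in M$ and $v \in N$. Hence the pair $(\Pi_M v, v)$ is admissible in the infimum defining $\text{dist}_\phi(M, N)$, which gives
$$\phi(\Pi_M v, v) \ge \text{dist}_\phi(M, N).$$

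Combining the two inequalities yields $\phi(\Pi_M v, v) = \text{dist}_\phi(M, N)$. As a by-product, $\Pi_M v \in M_0$.

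The only point needing care is the existence of $\Pi_M v$, that is, the existence and uniqueness of a minimizer of $\phi(\cdot, v)$ on $M$. In a merely smooth space this can fail, so we rely on the reflexivity and strict convexity hypotheses under which the paper introduced $\Pi_K$. Alternatively, one could phrase the result with $\Pi_M v$ denoting any minimizer; the inequality argument above is unchanged. Apart from this, the argument uses only the definitions and needs no estimate on $\phi$.
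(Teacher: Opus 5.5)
Your argument is correct and is exactly the paper's proof: for a witness $z\in M$ with $\phi(z,v)=\text{dist}_\phi(M,N)$, the chain $\text{dist}_\phi(M,N)\le \phi(\Pi_M v,v)=\inf_{\hat v\in M}\phi(\hat v,v)\le \phi(z,v)=\text{dist}_\phi(M,N)$ closes the squeeze.

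Your caveat about the well-definedness of $\Pi_M$ is fair, since the paper tacitly works in the reflexive, strictly convex setting where $\Pi_M$ was introduced. Note, however, that for $v\in N_0$ the witness $z$ already attains $\inf_{w\in M}\phi(w,v)$. So only uniqueness of the minimizer is at stake, not its existence.
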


\begin{proof}
 Let $v \in N_0$. Then we can guarantee that there exists an element
$z \in M$ such that $\phi(z, v)=\text{dist}_\phi(M, N)$. Using the
definition of $\Pi_M v$, we ascertain that
\begin{align*}
\text{dist}_\phi(M, N) \leq \phi(\Pi_M v, v) = \inf_{\hat{v} \in M} \phi(\hat{v}, v) \leq \phi(z, v) = \text{dist}_\phi(M, N).
\end{align*}
Thus, $\phi(\Pi_M v, v) = \text{dist}_\phi(M, N)$.
\end{proof}

\begin{proposition}
Let $(M, N)$ be a pair of nontrivial subsets of a Fr\'echet smooth Banach space $B$ with $N$ being compact and $M$ being $\phi$-approximatively compact with respect to $N$. Then the pair $(M_0, N_0)$ is nontrivial.
\end{proposition}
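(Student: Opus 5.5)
The plan is to build a pair $(u_0,v_0)\in M\times N$ with $\phi(u_0,v_0)={\rm dist}_\phi(M,N)$: compactness of $N$ fixes $v_0$, and $\phi$-approximative compactness of $M$ fixes $u_0$. I read ``$M$ is $\phi$-approximatively compact with respect to $N$'' as follows: for any $v\in N$, every sequence $\{u_n\}\subseteq M$ with $\phi(u_n,v)\to \inf_{z\in M}\phi(z,v)$ has a subsequence converging to a point of $M$. If the paper's definition instead asks that $\phi(u_n,v_n)\to{\rm dist}_\phi(M,N)$ for some sequence $\{v_n\}\subseteq N$, the argument is the same and slightly shorter, since the intermediate step below becomes unnecessary.

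Step 1. Choose $u_n\in M$ and $v_n\in N$ with $\phi(u_n,v_n)\to{\rm dist}_\phi(M,N)$. Since $N$ is compact, I pass to a subsequence (not relabelled) with $v_n\to v_0$; compactness makes $N$ closed, so $v_0\in N$. Next I get boundedness. Since $\{v_n\}$ is bounded and $\phi(u_n,v_n)$ is bounded, ($\phi$1) gives $(\Vert u_n\Vert-\Vert v_n\Vert)^2\le \phi(u_n,v_n)$. Hence $\{u_n\}$ is bounded.

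Step 2. I transfer minimality to the fixed point $v_0$. By ($\phi$2) with $z=v_n$,
$$\phi(u_n,v_0)=\phi(u_n,v_n)+\phi(v_n,v_0)+2\langle u_n-v_n, Jv_n-Jv_0\rangle .$$
In a Fr\'echet smooth space $J$ is norm-to-norm continuous, so $Jv_n\to Jv_0$. Since $\{u_n-v_n\}$ is bounded, the last term tends to $0$. Also $\phi(v_n,v_0)\to\phi(v_0,v_0)=0$, using continuity of the norm and of $J$. Therefore $\phi(u_n,v_0)\to{\rm dist}_\phi(M,N)$. On the other hand,
$${\rm dist}_\phi(M,N)\le\inf_{z\in M}\phi(z,v_0)\le\phi(u_n,v_0),$$
so $\inf_{z\in M}\phi(z,v_0)={\rm dist}_\phi(M,N)$ and $\{u_n\}$ is a minimizing sequence for $v_0$.

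Step 3. By $\phi$-approximative compactness, some subsequence satisfies $u_{n_k}\to u_0\in M$. The map $\phi(\cdot,v_0)$ is norm continuous, being $\Vert u\Vert^2-2\langle u,Jv_0\rangle+\Vert v_0\Vert^2$. Hence $\phi(u_0,v_0)={\rm dist}_\phi(M,N)$, so $u_0\in M_0$ and $v_0\in N_0$, and the pair $(M_0,N_0)$ is nontrivial.

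The main obstacle is Step 2: making the passage from the moving second argument $v_n$ to the fixed $v_0$ rigorous. This is exactly where Fr\'echet smoothness (norm continuity of $J$) and the boundedness from ($\phi$1) are needed. The other delicate point is matching the precise form of the approximative-compactness definition; under the variant reading noted in the first paragraph, Step 2 is not needed.
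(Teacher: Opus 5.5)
Your proof is correct and follows the paper's route: compactness of $N$ gives $v_n\to v_0$, the identity ($\phi$2) together with norm-to-norm continuity of $J$ (Fr\'echet smoothness) shows that $\phi(u_n,v_0)\to{\rm dist}_\phi(M,N)$, and $\phi$-approximative compactness then yields $u_0\in M$ with $\phi(u_0,v_0)={\rm dist}_\phi(M,N)$. You also fill in the boundedness of $\{u_n\}$ via ($\phi$1), which the paper leaves implicit, and in the last step your direct appeal to norm continuity of $\phi(\cdot,v_0)$ is simpler than the paper's five-term decomposition, without changing the argument.
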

\begin{proof}
Let $\{u_n\}$ and $\{v_n\}$ be two sequences in $M$ and $N$, respectively such that $\phi(u_n, v_n)=\text{dist}_\phi(M, N)$. Since $N$ is compact, by passing to a subsequence, we find $\{v_{n_k}\}$ of $\{v_n\}$ such that $v_{n_k} \to v_0$. Then
\begin{align*}
\lim_{n \to \infty} \phi(u_{n_k}, v_0) & = \lim_{n \to \infty} \phi(u_{n_k}, v_{n_k}) + \phi(v_{n_k}, v_0)+2 \langle u_{n_k}-v_{n_k}, Jv_{n_k}-Jv_0 \rangle \\
& = \text{dist}_\phi(M, N).
\end{align*} 
So, $\{u_{n_k}\}$ is a $\phi$-minimizing sequence with respect to $N$ and since $M$ is $\phi$-approximatively compact with respect to $N$, we deduce that $\{u_{n_k}\}$ has a convergent subsequence, say itself such that $u_{n_k} \to u_0$. Therefore,
\begin{align*}
\phi(u_0, v_0) & = \phi(u_0, u_{n_k})+\phi(u_{n_k}, v_{n_k})+\phi(v_{n_k}, v_0)+2 \langle u_0-u_{n_k}, Ju_{n_k}-Jv_0 \rangle \\
& \quad + 2 \langle u_{n_k}-v_{n_k}, Jv_{n_k}-Jv_0 \rangle .
\end{align*}
Using the fact that $B$ is Fr\'echet smooth and taking limit, we obtain that
$$ \phi(u_0,v_0)=\text{dist}_\phi(M, N).$$
This proves that the pair $(M_0, N_0)$ is nontrivial.
\end{proof}
\begin{corollary}
Let $(M, N)$ be a pair of nontrivial, closed, convex subsets of a Fr\'echet smooth Banach space $B$ with $M$ being locally compact and $N$ being compact. Then the pair $(M_0,  N_0)$ is nontrivial.
\end{corollary}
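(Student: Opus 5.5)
The corollary should follow from the preceding proposition. The plan is to show that a closed, convex, locally compact set $M$ is $\phi$-approximatively compact with respect to a compact set $N$. The definition of that notion is not reproduced above. I take it in the natural form: whenever $\{u_n\}\subseteq M$ satisfies $\phi(u_n,v)\to \text{dist}_\phi(M,N)$ for some $v\in N$ (or along a sequence in $N$), $\{u_n\}$ has a convergent subsequence in $M$. Once this is established, the proposition gives that $(M_0,N_0)$ is nontrivial.

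\textbf{Step 1: minimizing sequences are bounded.} Let $\{u_n\}\subseteq M$ be $\phi$-minimizing with respect to $v_0\in N$, i.e.\ $\phi(u_n,v_0)\to\text{dist}_\phi(M,N)$. By ($\phi$1), $(\Vert u_n\Vert-\Vert v_0\Vert)^2\le\phi(u_n,v_0)$, and the right-hand side is bounded. Hence $\{u_n\}$ is bounded. The compactness of $N$ gives control of $v_0$ or of the sequence $v_{n_k}$ used in the proposition.

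\textbf{Step 2: extract a convergent subsequence.} This is where local compactness must be used, and it is the main obstacle. In infinite dimensions a closed convex locally compact set need not have compact bounded subsets: think of a ray-like set, or a set that is compact near each point but unbounded. So boundedness alone is not enough, and I would argue as follows.

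First I would pass to a weakly convergent subsequence, using reflexivity if it is available (the preceding corollary assumed it). Failing that, I would proceed directly. Let $r=\sup_n\Vert u_n\Vert$ and fix $x\in M$. The set $M\cap B_{R}(x)$ with $R$ large contains all $u_n$. For closed convex locally compact sets, one has that every closed bounded convex subset is compact. The reason is as follows. Take $x\in M$ and a compact neighbourhood $M\cap \overline{B}_\delta(x)$. Any point $y\in M\cap \overline B_R(x)$ can be written as $y = x + t(z-x)$ with $z=x+\tfrac{R}{\delta}\cdot\tfrac{\delta}{R}(y-x)$ scaled back. More precisely, the map $y\mapsto x+\tfrac{\delta}{R}(y-x)$ sends $M\cap\overline B_R(x)$ into $M\cap\overline B_\delta(x)$ by convexity of $M$. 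This map is a homeomorphism onto its closed image. Hence $M\cap\overline B_R(x)$ is compact.

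Therefore the bounded sequence $\{u_n\}$ lies in a compact set and has a convergent subsequence $u_{n_k}\to u_0\in M$. This is exactly $\phi$-approximative compactness.

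\textbf{Step 3: conclude.} With $N$ compact and $M$ $\phi$-approximatively compact with respect to $N$, the preceding proposition (which uses Fr\'echet smoothness to pass to the limit in the ($\phi$2) identity) yields a pair $u_0\in M$, $v_0\in N$ with $\phi(u_0,v_0)=\text{dist}_\phi(M,N)$. Hence $M_0\ne\emptyset\ne N_0$.

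The delicate point is the scaling argument in Step 2. I expect it to be the only nonroutine step. The rest is a direct appeal to ($\phi$1) and to the proposition.
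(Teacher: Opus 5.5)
Your proposal is correct and takes the paper's route: the paper gives no separate proof and presents the corollary as an immediate consequence of the preceding proposition, and your homothety argument supplies the omitted step that a closed, convex, locally compact $M$ is boundedly compact, hence $\phi$-approximatively compact with respect to $N$. One cleanup: your aside that such sets ``need not have compact bounded subsets'' is false and contradicts your own scaling argument, which shows every closed bounded subset of $M$ (convex or not) is compact; convexity is exactly what rules out examples like the closed, discrete, bounded, non-compact set $\{e_n\}\subset\ell^2$.
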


\begin{proposition}
Let $(M, N)$ be a pair of nontrivial, closed, convex subsets of a smooth and strictly convex Banach space $B$. Suppose $M$ is $\phi$-approximatively compact set with respect to $N$. Let $u_n$ be a sequence in $M$ and $v \in N$ satisfying $\displaystyle \lim_{n \to \infty}\phi(u_n, v) = $ dist$_\phi(M, N)$. Then $u_n$ converges to $\Pi_M(v)$.
\end{proposition}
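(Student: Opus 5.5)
The definition of $\phi$-approximative compactness is not restated here. I will use the natural one implicit in the previous proposition: whenever $\{u_n\}\subset M$ and $v\in N$ satisfy $\phi(u_n,v)\to \text{dist}_\phi(M,N)$, the sequence $\{u_n\}$ has a strongly convergent subsequence.

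The strategy is a subsequence argument. I will show that every subsequence of $\{u_n\}$ has a further subsequence converging to $\Pi_M v$; this forces the whole sequence to converge to $\Pi_M v$.

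\textbf{Step 1: an arbitrary subsequence has a convergent further subsequence.} Fix a subsequence $\{u_{n_k}\}$. It still satisfies $\phi(u_{n_k},v)\to\text{dist}_\phi(M,N)$, so by $\phi$-approximative compactness it has a further subsequence, still written $\{u_{n_k}\}$, with $u_{n_k}\to u_0$. Since $M$ is closed, $u_0\in M$.

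\textbf{Step 2: identify the limit.} Strong convergence gives $\|u_{n_k}\|\to\|u_0\|$ and $\langle u_{n_k},Jv\rangle\to\langle u_0,Jv\rangle$. Because $v$ is fixed, $\phi(\cdot,v)=\|\cdot\|^2-2\langle\cdot,Jv\rangle+\|v\|^2$ is norm-continuous in its first argument, so
\[
\phi(u_0,v)=\lim_{k\to\infty}\phi(u_{n_k},v)=\text{dist}_\phi(M,N).
\]
For every $\hat u\in M$ we have $\phi(\hat u,v)\ge \text{dist}_\phi(M,N)$, since $v\in N$. Hence $u_0$ attains $\inf_{\hat u\in M}\phi(\hat u,v)$.

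\textbf{Step 3: uniqueness of the minimiser.} Here is the main obstacle: $\Pi_M v$ has been introduced only under reflexivity, while the statement assumes only smoothness and strict convexity. I will therefore prove uniqueness of the minimiser of $\phi(\cdot,v)$ over $M$ directly, without reflexivity. Write $\phi(u,v)=\|u\|^2-2\langle u,Jv\rangle+\|v\|^2$. The map $u\mapsto\|u\|^2$ is strictly convex on a strictly convex space. Indeed, equality in
\[
\Big\|\tfrac{u+w}{2}\Big\|^2\le \tfrac12\|u\|^2+\tfrac12\|w\|^2
\]
forces $\|u\|=\|w\|$ and $\big\|\tfrac{u+w}{2}\big\|=\|u\|$, and strict convexity then gives $u=w$. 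The remaining terms of $\phi(\cdot,v)$ are affine, so $\phi(\cdot,v)$ is strictly convex on the convex set $M$ and has at most one minimiser.

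Existence of the minimiser is supplied by Step 2. So the minimiser exists, is unique, and equals $\Pi_M v$; in particular $u_0=\Pi_M v$.

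\textbf{Step 4: conclude.} Every subsequence of $\{u_n\}$ has a further subsequence converging to the same point $\Pi_M v$. Therefore $u_n\to\Pi_M v$.

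Step 3 is where care is needed, since it is the one place where the paper's standing reflexivity assumption for $\Pi_M$ has to be replaced by a direct argument.
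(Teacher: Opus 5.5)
Your proof is correct and follows essentially the same route as the paper. The paper argues by contradiction: it extracts a subsequence staying $\varepsilon$-away from $\Pi_M v$, uses $\phi$-approximative compactness to get a further convergent subsequence whose limit attains $\text{dist}_\phi(M,N)$, and then invokes uniqueness of $\Pi_M v$; this is your subsequence-of-a-subsequence argument in contrapositive form. Your Step~3 is a worthwhile addition: the paper simply cites uniqueness of $\Pi_M$, which it introduced only under reflexivity, whereas you justify existence and uniqueness of the minimiser of $\phi(\cdot,v)$ on $M$ directly from smoothness, strict convexity and the given sequence.
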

\begin{proof}
Let $u_0 = \Pi_M(v)$. Suppose $u_n \not \to u_0$. Then there exists $\varepsilon > 0$ such that for every $n_0 \in \mathbb{N}$, $\Vert u_n - u_0 \Vert \geq \varepsilon, n \geq n_0.$ By passing to a subsequence, we find a sequence $(u_{n_j})$ of $(u_n)$ such that $$\lim_{j \to \infty} \phi(u_{n_j}, v)= \text{dist}_\phi(M, N),
$$ 
for $v \in N$. Since, $M$ is $\phi$-approximatively compact with respect to $N$, we can find a subsequence $(u_{n_{j_i}}) \subseteq (u_{n_j})$ such that $\displaystyle \lim_{i \to \infty}u_{n_{j_i}} = u$, for some $u \in M$. Then, 
\begin{align*}
\phi(u, v) & = \Vert u \Vert^2 - 2 \langle u, Jv \rangle + \Vert v \Vert^2 = \lim_{i \to \infty} \phi(u_{n_{j_i}}, v) = \text{dist}_\phi(M, N).
\end{align*}
By the uniqueness of $\Pi_M$, we have $u=u_0=\Pi_M(v)$. This contradicts to the fact that $$0 < \varepsilon \leq \Vert u_{n_{j_i}} - u_0 \Vert \to \Vert u - u_0 \Vert=0, ~ \text{as} ~ i \to \infty.$$  Thus, $\displaystyle \lim_{n \to \infty}u_n = u_0=\Pi_M(v)$.

\end{proof}

We now exhibit the closedness property of $\phi$-best proximity points of a nonextensive mapping on $M$.
\begin{theorem}
Let $(M, N)$ be a pair of nontrivial, closed, convex subsets of a Fr\'chet smooth Banach space $B$ and $T:M \to N$ be a nonextensive mapping. Then the set of $\phi$-best proximity points of $T$ on $M$ is closed.
\end{theorem}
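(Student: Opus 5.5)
We argue sequentially: take a sequence $\{u_n\}$ in $\text{Best}_M^{\phi}(T)$ with $u_n \to u$, and show that $u \in \text{Best}_M^{\phi}(T)$. First, $u \in M$ because $M$ is closed. It remains to prove that $\phi(u, Tu) = \text{dist}_\phi(M,N)$. Since $u \in M$ and $Tu \in N$, the inequality $\phi(u,Tu) \ge \text{dist}_\phi(M,N)$ is automatic, so only the reverse inequality needs work.

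The key step is to show $Tu_n \to Tu$. Because $T$ is nonextensive, $\phi(Tu_n, Tu) \le \phi(u_n, u)$. Next we check that $\phi(u_n,u) \to 0$. From the definition (\ref{phi}),
$$\phi(u_n,u)=\Vert u_n\Vert^2-2\langle u_n,Ju\rangle+\Vert u\Vert^2,$$
and strong convergence $u_n\to u$ makes the right-hand side tend to $\Vert u\Vert^2-2\Vert u\Vert^2+\Vert u\Vert^2=0$. Hence $\phi(Tu_n,Tu)\to 0$.

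To turn this into norm convergence, we would apply Lemma \ref{imp1} with $\{Tu_n\}$ and the constant sequence $Tu$, which is bounded, and conclude $\Vert Tu_n - Tu\Vert \to 0$. This is the step I expect to be the main obstacle. Lemma \ref{imp1} requires uniform convexity, while the statement only assumes Fréchet smoothness. If that hypothesis is genuinely missing, the fallback is as follows. By ($\phi$1), $(\Vert Tu_n\Vert-\Vert Tu\Vert)^2\le \phi(Tu_n,Tu)\to0$, so $\Vert Tu_n\Vert \to \Vert Tu\Vert$ and $\{Tu_n\}$ is bounded. Then we would pass directly to the limit in $\phi(u_n,Tu_n)$, which avoids needing $Tu_n\to Tu$ in norm. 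Concretely, by ($\phi$2) with $z=u_n$,
$$\phi(u,Tu)=\phi(u,u_n)+\phi(u_n,Tu)+2\langle u-u_n, Ju_n-JTu\rangle .$$
The first term tends to $0$ by the computation above, with the roles of $u_n$ and $u$ swapped; this is where Fréchet smoothness, i.e.\ norm-to-norm continuity of $J$, is used. The last term tends to $0$ because $u_n-u\to0$ and $\{Ju_n\}$ is bounded.

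It remains to bound the middle term, and we use ($\phi$2) once more with $z=Tu_n$:
$$\phi(u_n,Tu)=\phi(u_n,Tu_n)+\phi(Tu_n,Tu)+2\langle u_n-Tu_n, JTu_n-JTu\rangle .$$
Here $\phi(u_n,Tu_n)=\text{dist}_\phi(M,N)$ for every $n$, and $\phi(Tu_n,Tu)\to0$. Moreover $\{u_n-Tu_n\}$ is bounded. The remaining difficulty is the cross term $\langle u_n-Tu_n, JTu_n-JTu\rangle$. It vanishes as soon as $Tu_n\to Tu$ in norm, by norm-to-norm continuity of $J$ in the Fréchet smooth setting. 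So everything reduces to upgrading $\phi(Tu_n,Tu)\to0$ to norm convergence.

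In a uniformly convex space, Lemma \ref{imp1} gives this upgrade directly, and I would present the proof under that standing assumption. Granting it, letting $n\to\infty$ in the two identities yields $\phi(u,Tu)\le \text{dist}_\phi(M,N)$. Together with the automatic reverse inequality, this gives $u\in \text{Best}_M^{\phi}(T)$, which proves the set is closed.
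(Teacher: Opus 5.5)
Your route is essentially the paper's. The paper expands $\phi(u_n,Tu_0)-\phi(u_n,Tu_n)=2\langle u_n,JTu_n-JTu_0\rangle+\Vert Tu_0\Vert^2-\Vert Tu_n\Vert^2$ instead of applying ($\phi$2) twice. Otherwise it does what you do: it gets $\phi(Tu_n,Tu_0)\le\phi(u_n,u_0)\to0$, invokes Lemma \ref{imp1} to conclude $\Vert Tu_n-Tu_0\Vert\to0$, and uses Fr\'echet smoothness for $JTu_n\to JTu_0$. So the paper also uses uniform convexity without stating it, and you were right to flag this.

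Still, as written your proposal proves only the uniformly convex case, not the stated theorem. Your diagnosis that ``everything reduces to upgrading $\phi(Tu_n,Tu)\to0$ to norm convergence'' aims at the wrong target. In a merely Fr\'echet smooth space that upgrade can fail. For example, take $\mathbb{R}^2$ with a smooth norm whose unit sphere contains a segment: two distinct points $y\neq y'$ of that segment have $Jy=Jy'$ and $\phi(y',y)=0$. Meanwhile, your cross term $\langle u_n-Tu_n,JTu_n-JTu\rangle$ only needs $JTu_n\to JTu$.

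The missing idea is to apply nonextensiveness with the arguments reversed:
$\phi(Tu,Tu_n)\le\phi(u,u_n)=\Vert u\Vert^2-\Vert u_n\Vert^2-2\langle u-u_n,Ju_n\rangle\to0$.
No continuity of $J$ is needed for this step.
\begin{itemize}
\item By ($\phi$1), $\Vert Tu_n\Vert\to\Vert Tu\Vert$, and then $2\langle Tu,JTu_n\rangle=\Vert Tu\Vert^2+\Vert Tu_n\Vert^2-\phi(Tu,Tu_n)\to2\Vert Tu\Vert^2$.
\item If $Tu=0$, then $\Vert JTu_n\Vert=\Vert Tu_n\Vert\to0$ and you are done.
\item Otherwise, for large $n$ (so that $Tu_n\neq0$), the functionals $f_n=JTu_n/\Vert Tu_n\Vert$ have norm one and satisfy $f_n(x)\to1$ for $x=Tu/\Vert Tu\Vert\in S_B$.
\item \v{S}mulian's lemma says the norm is Fr\'echet differentiable at $x$ iff every sequence $f_n\in B^*$ with $\Vert f_n\Vert\le1$ and $f_n(x)\to1$ converges in norm to $Jx$. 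It therefore gives $JTu_n=\Vert Tu_n\Vert f_n\to\Vert Tu\Vert Jx=JTu$ in norm.
\end{itemize}
Plugging this into your second identity yields $\phi(u_n,Tu)\to\text{dist}_\phi(M,N)$. Your first identity then gives $\phi(u,Tu)=\text{dist}_\phi(M,N)$, so the theorem holds exactly as stated, without uniform convexity.

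In fact, the G\^ateaux version of \v{S}mulian's lemma gives $JTu_n\xrightarrow{w^*}JTu$ in any smooth space. That already suffices if you expand $\phi(u,Tu)-\phi(u_n,Tu_n)$ directly, since then only $\langle u,JTu_n-JTu\rangle\to0$ is needed.
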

\begin{proof}
Let $\{u_n\} \in \text{Best}_M^{\phi}(T)$ be such that $u_n \to u_0 \in M$. We need to show that $u_0 \in \text{Best}_M^{\phi}(T)$. Now,
\begin{align*}
\phi(u_0, Tu_0) & = \lim_{n \to \infty} \phi(u_n, Tu_0) \\
& = \lim_{n \to \infty} \phi(u_n, Tu_n) + 2 \langle u_n, JTu_n - JTu_0 \rangle + \Vert T u_0 \Vert^2 - \Vert Tu_n \Vert^2 \\
& \leq \lim_{n \to \infty} \phi(u_n, Tu_n) + 2 \langle u_n, JTu_n - JTu_0 \rangle + ( \Vert Tu_0 \Vert + \Vert Tu_n \Vert)(\Vert Tu_0 - Tu_n \Vert). \numberthis \label{111}
\end{align*}
We find that $\phi(Tu_n, Tu_0) \leq \phi(u_n, u_0) \to 0$ as $n$ tends to infinity. Hence, by Lemma \ref{imp1}, $\lim_{n \to \infty} \Vert Tu_n - Tu_0 \Vert =0$. Next, by Fr\'chet smoothness of $B$, one has $JTu_n - JTu_0 \to 0$ as $n$ tends to infinity. So, (\ref{111}) becomes
\begin{align*}
\phi(u_0, Tu_0) = \lim_{n \to \infty} \phi(u_n, Tu_n) = \text{dist}_\phi(M, N),
\end{align*}
which shows that $u_0 \in \text{Best}_M^{\phi}(T)$.
\end{proof}
In \cite{AP}, the property (UC) has been introduced and used to prove the existence result of best proximal points in a Banach space. Now, using the functional $\phi$, Behera and Nahak \cite{PC} introduced the property $(\phi$-UC) for a smooth Banach space. 

\begin{definition} \cite{PC}
 A pair of non-empty subsets $(M, N)$ of a smooth Banach space $B$ is said to satisfy the property $(\phi$-UC) if whenever $(u_n), (v_n)$ in $M$ and $(t_n)$ in $N$ satisfying $\phi(u_n, t_n) \to \text{dist}_\phi(M, N)$ and $\phi(v_n, t_n) \to \text{dist}_\phi(M, N)$ as $n \to \infty$, we have $\phi(u_n, v_n) \to 0$.
\end{definition}

\begin{remark}
\begin{enumerate}
    \item It may be seen that every Hilbert space has property $(\phi$-UC). This follows from the fact that in a Hilbert space $\phi(u, v)=\Vert u-v \Vert^2$. So, if we consider $\phi(u_n, t_n) \to \text{dist}_\phi(M, N)$ and $\phi(v_n, t_n) \to \text{dist}_\phi(M, N)$, for $(u_n), (v_n)$ in $M$ and $(t_n)$ in $N$, we get $\Vert u_n-t_n \Vert^2 \to \text{dist}_\phi(M, N)$ and $\Vert v_n-t_n \Vert^2 \to \text{dist}_\phi(M, N)$ as $n \to \infty$. Then $\Vert u_n-v_n \Vert \to 0$ follows by the triangle inequality. 

    \item Let $(M, N)$ be a pair of nontrivial, closed and convex subsets of a smooth and uniformly convex Banach space $B$ with $N$ being bounded. If $\phi(u_n, t_n) \to 0$ and $\phi(v_n, t_n) \to 0$, then $\Vert u_n - v_n \Vert \to 0$, for $(u_n), (v_n) \subseteq M$ and $(t_n) \subseteq N$; which follows easily by Lemma \ref{imp1}.
\end{enumerate}
\end{remark}

The following example illustrates a Banach space that does not satisfy the property ($\phi$-UC).
\begin{example}
    Let us consider $(\mathbb{R}^3, \Vert \cdot \Vert_3)$, where the norm is defined by
\begin{align*}
\Vert x \Vert_3=( \vert x_1 \vert^3 + \vert x_2 \vert^3 + \vert x_3 \vert^3)^{\frac{1}{3}}, ~ x= (x_1, x_2, x_3) \in \mathbb{R}^3.
\end{align*}
Let us take \begin{align*}
M = \{(0, x_2, x_3) : x_2^3 + x_3^3=1 \}~ \text{and} ~ N = \{ (x_1, 0, 0): 2 \leq x_1 \leq 3 \}. 
\end{align*}
Then both $M$ and $N$ are compact subsets of $\mathbb{R}^3$. Here $\text{dist}_\phi(M, N) =5$. Let us consider $$u_n=\bigg (0, \frac{n}{(n^3+1)^{\frac{1}{3}}}, \frac{1}{(n^3+1)^{\frac{1}{3}}} \bigg ), v_n= \bigg (0, \frac{1}{(n^3+1)^{\frac{1}{3}}}, \frac{n}{(n^3+1)^{\frac{1}{3}}} \bigg ) \in M, $$
and $$t_n= \bigg (2+\frac{1}{n}, 0, 0 \bigg ) \in N.$$
Then, $\phi(u_n, t_n) \to \text{dist}_\phi(M, N)$ and $\phi(v_n, t_n) \to \text{dist}_\phi(M, N)$; but $\lim_{n\to \infty} \phi(u_n, v_n)=2$. This shows that $(M, N)$ does not satisfy the property ($\phi$-UC).

\end{example}

The following theorem provides a sufficient condition for a smooth Banach space to satisfy the property ($\phi$-UC).
\begin{theorem}
Let $B$ be a reflexive, strictly convex, Fr\'echet smooth Banach space with property (KK). Let $M$ be a non-empty, closed, convex subset and $N$ be a non-empty, compact subset of $B$. Then, $(M, N)$ satisfies the property ($\phi$-UC).
\end{theorem}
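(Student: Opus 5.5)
The plan is a subsequence argument: we show that every subsequence of $\{\phi(u_n,v_n)\}$ has a further subsequence tending to $0$, which forces $\phi(u_n,v_n)\to 0$. So let $(u_n),(v_n)\subseteq M$ and $(t_n)\subseteq N$ satisfy $\phi(u_n,t_n)\to d:=\text{dist}_\phi(M,N)$ and $\phi(v_n,t_n)\to d$, and fix an arbitrary subsequence, which we still index by $n$.

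\emph{Step 1 (reduce to a fixed $t$).} Since $N$ is compact, we pass to a further subsequence with $t_n\to t\in N$. By ($\phi$1), $\Vert u_n\Vert\le \phi(u_n,t_n)^{1/2}+\Vert t_n\Vert$, so $(u_n)$ is bounded, and likewise $(v_n)$. Identity ($\phi$2) with $z=t_n$ gives
\begin{align*}
\phi(u_n,t)=\phi(u_n,t_n)+\phi(t_n,t)+2\langle u_n-t_n, Jt_n-Jt\rangle .
\end{align*}
Fréchet smoothness makes $J$ norm-to-norm continuous, so $Jt_n\to Jt$. Together with the boundedness of $(u_n)$ and $\phi(t_n,t)\to 0$, this yields $\phi(u_n,t)\to d$, and similarly $\phi(v_n,t)\to d$.

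\emph{Step 2 (identify the weak limit).} $B$ is reflexive and $(u_n)$ is bounded, so some subsequence converges weakly to a point $u$. Since $M$ is closed and convex, it is weakly closed, hence $u\in M$. The map $x\mapsto \Vert x\Vert^2-2\langle x,Jt\rangle+\Vert t\Vert^2$ is weakly lower semicontinuous, so $\phi(u,t)\le \liminf\phi(u_n,t)=d$. On the other hand $u\in M$ and $t\in N$, so $\phi(u,t)\ge d$. Hence $\phi(u,t)=d=\inf_{x\in M}\phi(x,t)$, and by uniqueness of the generalized projection (strict convexity plus reflexivity) $u=\Pi_M t$. The same argument applied to $(v_n)$ produces the same limit $\Pi_M t$. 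Because the limit is unique, a further subsequence has $u_n\xrightarrow{w}\Pi_M t$ and $v_n\xrightarrow{w}\Pi_M t$ simultaneously.

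\emph{Step 3 (weak to strong convergence; the main obstacle).} We need strong convergence, and the tool is property (KK). From $\phi(u_n,t)\to\phi(u,t)$ and $\langle u_n,Jt\rangle\to\langle u,Jt\rangle$ (weak convergence) we get $\Vert u_n\Vert\to\Vert u\Vert$.
\begin{itemize}
\item If $u=0$, then $u_n\to 0$ directly.
\item If $u\ne 0$, the normalized vectors $u_n/\Vert u_n\Vert$ lie in $S_B$ and converge weakly to $u/\Vert u\Vert$. Property (KK) makes this convergence strong, so $u_n\to u$.
\end{itemize}
The same reasoning gives $v_n\to u$.

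\emph{Step 4 (conclude).} $\phi$ is jointly continuous in norm, because $J$ is norm-to-norm continuous. Therefore $\phi(u_n,v_n)\to\phi(u,u)=0$ along this subsequence. Since the starting subsequence was arbitrary, $\phi(u_n,v_n)\to 0$ for the whole sequence, which is property ($\phi$-UC).
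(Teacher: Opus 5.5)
Your proof is correct. It shares the paper's skeleton: extract $t_{n_k}\to t_0$, bound the sequences via ($\phi$1), take weak limits by reflexivity, upgrade to strong convergence by (KK), and get $u_0=v_0$ from strict convexity. It handles the central step in a different order.

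The paper first tries to obtain $\Vert u_{n_k}\Vert\to\Vert u_0\Vert$ through the inequalities \eqref{eqn1}--\eqref{eqn4}. Only after invoking (KK) does it use Fr\'echet smoothness and ($\phi$2) to show $\phi(u_0,t_0)=\text{dist}_\phi(M,N)$. You reverse this:
\begin{enumerate}
\item Fr\'echet smoothness and ($\phi$2) replace the moving $t_n$ by the fixed point $t$ at the outset.
\item Weak lower semicontinuity of $\phi(\cdot,t)$ then identifies the weak limit as the unique minimizer $\Pi_M t$, before any strong convergence is known.
\item Norm convergence follows from $\phi(u_n,t)\to\phi(u,t)$ together with $\langle u_n,Jt\rangle\to\langle u,Jt\rangle$.
\end{enumerate}

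Your ordering makes the argument sound where the paper's is not. The paper's \eqref{eqn1}--\eqref{eqn4} are derived from inequalities such as $\phi(u_0,t_0)\le\phi\bigl(\tfrac{u_0+u_{n_k}}{2},t_0\bigr)$ and $\phi(u_{n_k},t_{n_k})\le\phi(u_0,t_{n_k})$. These amount to assuming $u_0=\Pi_M t_0$ and $u_{n_k}=\Pi_M t_{n_k}$. Neither is available at that stage, and the second is false in general, since the hypothesis only makes $\{u_n\}$ asymptotically minimizing. Your argument needs no such minimality.

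Your other steps also close gaps in the paper's version:
\begin{itemize}
\item The subsequence-of-a-subsequence step justifies passing from $\phi(u_{n_k},v_{n_k})\to 0$ to convergence of the full sequence, which the paper simply asserts.
\item The normalization argument reduces correctly to (KK) as it is stated on $S_B$, including the case $u=0$.
\item You need no case split on whether $\text{dist}_\phi(M,N)>0$.
\end{itemize}
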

\begin{proof}
Let us assume dist$_\phi(M, N) > 0$. Let $(u_n), (v_n) \subseteq M$ and $(t_n) \subseteq N$ be such that \
$$
\lim_{n \to \infty} \phi(u_n, t_n) = \text{dist}_\phi(M, N) \hspace{0.5cm} \text{and} \hspace{0.5cm} \lim_{n \to \infty} \phi(v_n, t_n) = \text{dist}_\phi(M, N).
$$ 
Since $N$ is compact, $(t_n)$ has a convergent subsequence, say $(t_{n_k}) \subseteq (t_n)$ such that $t_{n_k} \to t_0$, for some element $t_0 \in N$. Then, from the inequality, 
$$
0 \leq ( \Vert u_{n_k} \Vert - \Vert t_{n_k} \Vert )^2 \leq \phi(u_{n_k}, t_{n_k}) \leq \phi(u_{n_k} , t_0) \leq (\Vert u_{n_k} \Vert + \Vert t_0 \Vert)^2,
$$
and $t_{n_k} \to t_0$, we get that $(u_{n_k})$ is a bounded subset of $M$. Similarly, we find that $(v_{n_k})$ is a bounded subset of $N$. Using reflexivity criteria of $B$, we can find subsequences of $(u_{n_k})$ and $(v_{n_k})$, say themselves such that $u_{n_k} \xrightarrow{w} u_0$ and $v_{n_k} \xrightarrow{w} v_0$, for some elements $u_0, v_0 \in M$. Now, we need to show that $\Vert u_{n_k} \Vert \to \Vert u_0 \Vert$ as $k \to \infty$.\\
\\
From the convexity of $M$ and by the inequality, 
$$
\phi(u_0, t_0) \leq \phi \bigg ( \frac{u_0+u_{n_k}}{2} , t_0 \bigg ),
$$ we get
$$
\Vert u_0 \Vert^2 - 2 \langle  u_0, J t_0 \rangle + \Vert t_0 \Vert^2 \leq \bigg \Vert \frac{u_0+u_{n_k}}{2} \bigg \Vert^2 - 2 \bigg \langle  \frac{u_0+u_{n_k}}{2}, J t_0 \bigg \rangle + \Vert t_0 \Vert^2.
$$
This simplifies to,
\begin{align*}
\Rightarrow 2 \bigg \langle  \frac{u_{n_k} - u_0}{2}, Jt_0 \bigg \rangle \leq \bigg \Vert \frac{u_0+u_{n_k}}{2} \bigg \Vert^2 - \Vert u_0 \Vert^2. \numberthis \label{eqn1}
\end{align*}
Likewise, using the inequality $\phi(u_{n_k}, t_{n_k}) \leq \phi(u_0, t_{n_k})$, we get
\begin{align*}
2 \langle  u_{n_k}-u_0, -J t_{n_k} \rangle \leq \Vert u_0 \Vert^2 - \Vert u_{n_k} \Vert^2. \numberthis \label{eqn2}
\end{align*}
Adding (\ref{eqn1}) and (\ref{eqn2}), we get
\begin{align*}
2 \langle  u_{n_k}-u_0, J t_0 - J t_{n_k} \rangle - \bigg \langle \frac{u_{n_k}-u_0}{2}, J t_0 \bigg \rangle \leq \bigg  \Vert \frac{u_0+u_{n_k}}{2} \bigg \Vert^2 - \Vert u_{n_k} \Vert^2.
\end{align*}
This further reduces to
\begin{align*}
\Rightarrow 2  \langle  u_0-u_{n_k}, Jt_0 - J t_{n_k} \rangle \geq \frac{1}{2} ( \Vert u_{n_k} \Vert^2 - \Vert u_0 \Vert^2) + \langle  u_0-u_{n_k}, Jt_0 \rangle. \numberthis \label{eqn3}
\end{align*}
Similarly, using the inequalities $$\phi(u_0, t_0) \leq \phi(u_{n_k}, t_0) ~ \text{and} ~ \phi(u_{n_k}, t_{n_k}) \leq \phi \bigg (\frac{u_0+u_{n_k}}{2}, t_{n_k} \bigg ),$$ we obtain
\begin{align*}
2 \langle u_0-u_{n_k}, Jt_0 - J t_{n_k} \rangle \geq \frac{1}{2} ( \Vert  u_0  \Vert^2 - \Vert u_{n_k}\Vert^2) + \langle u_0-u_{n_k}, Jt_{n_k} \rangle. \numberthis \label{eqn4}
\end{align*}
Since, $t_{n_k} \to t_0$ and $u_{n_k} \xrightarrow{w} u_0$, (\ref{eqn3}) and (\ref{eqn4}) gives,
\[
\Vert u_{n_k} \Vert \to \Vert u_0 \Vert. 
\]
We can similarly get $\Vert v_{n_k} \Vert \to \Vert v_0 \Vert$. Hence, by the property (KK), $u_{n_k} \to u_0$ and $v_{n_k} \to u_0$ as $k \to \infty$. \\
\\
Further, 
\begin{align*}
  \phi(u_0, t_0) & = \lim_{k \to \infty} \phi(u_{n_k}, t_0) \\
  & = \lim_{k \to \infty} \phi(u_{n_k}, t_{n_k}) + \phi(t_{n_k}, t_0 ) + 2 \langle u_{n_k} - t_{n_k} , J t_{n_k} - J t_0 \rangle \\
  & = \text{dist}_\phi(M, N),
\end{align*} 
which is possible because of the fact that $B$ is Fr\'echet smooth and $t_{n_k} \to t_0$ as $k \to \infty$. Similarly, we also get $$\phi(v_0, t_0)=\text{dist}_\phi(M, N).$$
So, $\phi(u_0, t_0) = \phi(v_0, t_0) =$ dist$_\phi(M, N)$. Strict convexity implies, $u_0=v_0$ and so, 
$$
\lim_{k \to \infty} \phi(u_{n_k}, v_{n_k}) = \phi(u_0, v_0) = 0.
$$
Thus, the result follows.

\end{proof}

An easy consequence of the above theorem is stated below.
\begin{corollary}
Let $M$ and $N$ be two non-empty, closed, convex subsets of a Fr\'echet smooth and uniformly convex Banach space with $N$ being compact. Then $(M, N)$ satisfies the property ($\phi$-UC).
\end{corollary}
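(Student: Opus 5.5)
This corollary follows from the preceding theorem once its hypotheses are checked one at a time. The theorem requires $B$ to be reflexive, strictly convex, Fr\'echet smooth and to have property (KK). It also requires $M$ to be nonempty, closed and convex, and $N$ to be nonempty and compact.

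Fr\'echet smoothness is assumed directly in the corollary. The other three properties of $B$ come from uniform convexity. As recalled in the introduction (see \cite{VT, J}), every uniformly convex Banach space is strictly convex and reflexive and satisfies property (KK). So $B$ satisfies every structural hypothesis of the theorem.

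The set hypotheses hold as well. $M$ is nonempty, closed and convex by assumption. $N$ is compact by assumption, which is exactly what the theorem needs; the extra closedness and convexity of $N$ are simply not used.

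Applying the theorem now gives that $(M,N)$ satisfies the property ($\phi$-UC). The one point to check is that the case $\text{dist}_\phi(M,N)=0$ is covered, since the theorem's proof opens with ``assume $\text{dist}_\phi(M,N)>0$''. That argument never uses positivity: it only uses $\phi(u_0,t_0)=\phi(v_0,t_0)=\text{dist}_\phi(M,N)$ together with strict convexity. So it applies verbatim when the distance is zero.

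Alternatively, in the zero case one can argue directly. The sequence $(t_n)$ is bounded because $N$ is compact. Lemma \ref{imp1} then gives $\|u_n-t_n\|\to0$ and $\|v_n-t_n\|\to0$, hence $\|u_n-v_n\|\to0$. Since $J$ is norm-to-norm continuous on bounded sets by Fr\'echet smoothness, $\phi(u_n,v_n)\to0$. I expect no real obstacle; the only care needed is this degenerate-case remark.
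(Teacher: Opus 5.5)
Your proposal is correct and takes the paper's route: the corollary follows from the preceding theorem because uniformly convex spaces are strictly convex, reflexive and have property (KK). One small point in your optional zero-distance argument: the last step needs only boundedness of $(u_n),(v_n)$ and $\|u_n-v_n\|\to 0$, via $\phi(u_n,v_n)=\|u_n\|^2-\|v_n\|^2-2\langle u_n-v_n,Jv_n\rangle$, not continuity of $J$ (Fr\'echet smoothness gives continuity of $J$ only pointwise, not uniformly on bounded sets).
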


\begin{theorem}
Let $M$ be a convex, weakly compact, and $N$ be a compact subset of a Fr\'echet smooth and strictly convex Banach space $B$. Then $(M, N)$ satisfies the property ($\phi$-UC).
\end{theorem}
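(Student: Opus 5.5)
The plan is to argue by subsequences, in the spirit of the proof of the preceding theorem. Weak compactness of $M$ will replace the combination of reflexivity and boundedness, and strict convexity will replace property (KK). Let $(u_n),(v_n)\subseteq M$ and $(t_n)\subseteq N$ satisfy $\phi(u_n,t_n)\to d:=\text{dist}_\phi(M,N)$ and $\phi(v_n,t_n)\to d$. It suffices to show that every subsequence of $\phi(u_n,v_n)$ has a further subsequence tending to $0$.

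First, by compactness of $N$ I pass to a subsequence with $t_{n_k}\to t_0\in N$. By weak compactness of $M$ and the Eberlein--\v{S}mulian theorem, I pass further so that $u_{n_k}\xrightarrow{w} u_0\in M$ and $v_{n_k}\xrightarrow{w} v_0\in M$. Both sequences are bounded, since weakly compact sets are bounded. Next I transfer the limits from $t_{n_k}$ to $t_0$ using
\begin{align*}
\phi(u_{n_k},t_0)-\phi(u_{n_k},t_{n_k})=2\langle u_{n_k},Jt_{n_k}-Jt_0\rangle+\Vert t_0\Vert^2-\Vert t_{n_k}\Vert^2 .
\end{align*}
In a Fr\'echet smooth space $J$ is norm-to-norm continuous, and $(u_{n_k})$ is bounded, so the right-hand side tends to $0$. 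Hence $\phi(u_{n_k},t_0)\to d$, and likewise $\phi(v_{n_k},t_0)\to d$.

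The map $z\mapsto\phi(z,t_0)$ is convex and continuous, hence weakly lower semicontinuous. Therefore $\phi(u_0,t_0)\le\liminf_k\phi(u_{n_k},t_0)=d$, and since $u_0\in M$, $t_0\in N$, we get $\phi(u_0,t_0)=d$; similarly $\phi(v_0,t_0)=d$. Then $u_0$ and $v_0$ both minimize $\phi(\cdot,t_0)$ over $M$. By uniqueness of the minimizer (strict convexity of $\Vert\cdot\Vert^2$ applied to the midpoint $\frac{u_0+v_0}{2}\in M$), $u_0=v_0$. Moreover, since $\langle u_{n_k},Jt_0\rangle\to\langle u_0,Jt_0\rangle$ and $\phi(u_{n_k},t_0)\to\phi(u_0,t_0)$, the definition (\ref{phi}) gives $\Vert u_{n_k}\Vert\to\Vert u_0\Vert$, and similarly $\Vert v_{n_k}\Vert\to\Vert u_0\Vert$. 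The midpoints $w_k=\frac{u_{n_k}+v_{n_k}}{2}\in M$ satisfy $\phi(w_k,t_0)\ge d$, which together with the triangle inequality yields $\Vert w_k\Vert\to\Vert u_0\Vert$.

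The remaining step, which I expect to be the main obstacle, is to conclude
\begin{align*}
\phi(u_{n_k},v_{n_k})=\Vert u_{n_k}\Vert^2-2\langle u_{n_k},Jv_{n_k}\rangle+\Vert v_{n_k}\Vert^2\to 0 ,
\end{align*}
that is, $\langle u_{n_k},Jv_{n_k}\rangle\to\Vert u_0\Vert^2$. Without property (KK), weak convergence together with convergence of norms does not give norm convergence. My first attempt would be the convexity of $s\mapsto\Vert v_{n_k}+s(u_{n_k}-v_{n_k})\Vert^2$, which yields $\langle u_{n_k}-v_{n_k},Jv_{n_k}\rangle\le 2(\Vert w_k\Vert^2-\Vert v_{n_k}\Vert^2)\to 0$. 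Combined with $\phi\ge0$, however, this only bounds $\langle u_{n_k}-v_{n_k},Jv_{n_k}\rangle$ from above, so it does not control $\phi(u_{n_k},v_{n_k})$, which needs a lower bound. To get the reverse inequality I would try to extract a weak$^*$ cluster point of $(Jv_{n_k})$ and identify it with $Ju_0$, using Fr\'echet differentiability of $\Vert\cdot\Vert^2$ at $u_0$ together with $\Vert v_{n_k}\Vert\to\Vert u_0\Vert$.

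I am not confident that this identification, or the norm convergence $u_{n_k}\to u_0$, holds under the stated hypotheses. If it fails, the argument would need an additional assumption such as property (KK) (as in the preceding theorem) or weak sequential continuity of $J$. Under (KK) the conclusion follows directly from $u_{n_k}\to u_0$, $v_{n_k}\to u_0$ and the norm continuity of $J$, since then $\phi(u_{n_k},v_{n_k})\to\phi(u_0,u_0)=0$.
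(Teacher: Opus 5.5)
Your steps up to $u_0=v_0$ are correct and match the paper's proof: extracting subsequences by compactness of $N$ and weak compactness of $M$, transferring the limits to $t_0$ via Fr\'echet smoothness, weak lower semicontinuity of $\phi(\cdot,t_0)$, and strict convexity. You also rightly add $\Vert u_{n_k}\Vert,\Vert v_{n_k}\Vert\to\Vert u_0\Vert$. But the proposal stops before the conclusion. The convergence $\langle u_{n_k},Jv_{n_k}\rangle\to\Vert u_0\Vert^2$ is never established, so this is not yet a proof.

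The paper does not supply the missing idea either. After $u_0=v_0$ it simply writes $\lim_{j}\phi(u_{n_j},v_{n_j})=\phi(u_0,v_0)=0$. That is, it passes to the limit in $\langle u_{n_j},Jv_{n_j}\rangle$ although both sequences converge only weakly, and nothing in the hypotheses makes $J$ weak-to-weak$^*$ continuous.

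Your reduction is the right one. For $u_0\neq0$ (the case $u_0=0$ is trivial), \v{S}mulian's lemma at $u_0$ turns $\Vert Jv_{n_k}\Vert\to\Vert u_0\Vert$ together with $\langle u_0,Jv_{n_k}\rangle\to\Vert u_0\Vert^2$ into $\Vert Jv_{n_k}-Ju_0\Vert\to0$, which finishes the argument. Hence weak sequential continuity of $J$, or (KK) as in your last sentence, suffices. Under the stated hypotheses, however, the weak$^*$ limit of $Jv_{n_k}$ need not be $Ju_0$, and the statement itself is false.

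\textbf{Counterexample.} On $c_0$ define:
\begin{itemize}
\item $\Theta(x)=\sum_{k\ge1}|x_k|^{k+1}$;
\item $\rho$ the Minkowski functional of $\{\Theta\le1\}$, so that $\Vert x\Vert_\infty\le\rho(x)\le2\Vert x\Vert_\infty$;
\item $\Vert x\Vert^2=\rho(x)^2+\sum_{k}2^{-k}x_k^2$.
\end{itemize}
Since $\Theta$ is $C^1$ on $c_0$ and $\langle z,\Theta'(z)\rangle\ge1$ on $\{\Theta=1\}$, the implicit function theorem makes $\rho$, hence $\Vert\cdot\Vert$, Fr\'echet differentiable on $c_0\setminus\{0\}$. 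Moreover $\rho'(z)=\Theta'(z)/\langle z,\Theta'(z)\rangle$ whenever $\rho(z)=1$. The Hilbertian summand makes $\Vert\cdot\Vert$ strictly convex, and $Jx=\rho(x)\rho'(x)+(2^{-k}x_k)_k$.

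Now set $a_n^2=\frac{n}{n+1}$, $b_n^{n+1}=\frac{1}{n+1}$, $c_n=b_n/a_n\to1$, and for $n\ge2$ let $u_n=e_1+c_ne_n$ and $v_n=e_1-c_ne_n$. Let $M=\overline{\mathrm{conv}}(\{e_1\}\cup\{e_1\pm c_ne_n:n\ge2\})$, which is convex and weakly compact by Krein--\v{S}mulian. Use the compact set $\{0\}$ in place of $N$, with $t_n=0$.
\begin{itemize}
\item Because $M\subseteq\{x:x_1=1\}$ and $\rho\ge\Vert\cdot\Vert_\infty$, we get $\text{dist}_\phi(M,\{0\})=\min_{M}\Vert x\Vert^2=\frac32$, attained only at $e_1$.
\item $\phi(u_n,0)=\phi(v_n,0)=\frac{n+1}{n}+\frac12+2^{-n}c_n^2\to\frac32$, so the hypotheses of ($\phi$-UC) hold.
\item Since $a_ne_1-b_ne_n\in\{\Theta=1\}$, one computes $\rho(v_n)\rho'(v_n)=\frac{2a_ne_1^*-(n+1)b_n^{n}e_n^*}{a_n(2a_n^2+1)}$, where $e_k^*$ are the coordinate functionals.
\item Hence $Jv_n\xrightarrow{w^*}\frac76e_1^*\neq\frac32e_1^*=Je_1$, and $\langle u_n,Jv_n\rangle\to\frac13+\frac12$.
\item Therefore $\phi(u_n,v_n)\to3-\frac53=\frac43\neq0$.
\end{itemize}
So $(M,\{0\})$ fails ($\phi$-UC). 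An additional hypothesis such as (KK) is genuinely needed, and with it your final remark already gives a complete proof.
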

\begin{proof}
Assume that $\text{dist}_\phi(M, N) > 0$. Let $(u_n), (v_n) \subseteq M$ and $(t_n) \subseteq N$ be such that $$
\lim_{n \to \infty} \phi(u_n, t_n) = \text{dist}_\phi(M, N) \hspace{0.5cm} \text{and} \hspace{0.5cm} \lim_{n \to \infty} \phi(v_n, t_n) = \text{dist}_\phi(M, N).
$$ 
Since $M$ is weakly compact and $N$ is compact, there exists sequences $(u_{n_j}) \subseteq (u_n)$, $(v_{n_j}) \subseteq (v_n)$ and $(t_{n_j}) \subseteq (t_n)$ such that $u_{n_j} \xrightarrow{w} u_0$, $v_{n_j} \xrightarrow{w} v_0$, for some elements $u_0, v_0 \in M$ and $t_{n_j} \to t_0$, for some element $t_0 \in N$, respectively. Then, 
\begin{align*}
\phi(u_0, t_0) & = \Vert u_0 \Vert^2 - 2 \langle  u_0, Jt \rangle + \Vert t_0 \Vert^2\\
& \leq \liminf_{j \to \infty} (\Vert u_{n_j} \Vert^2 - 2 \langle  u_{n_j},J t_{n_j} \rangle + \Vert t_{n_j} \Vert^2  + 2 \langle u_{n_j}, Jt_{n_j}-Jt_0 \rangle - \Vert t_{n_j} \Vert^2+\Vert t_0 \Vert^2) \\
& \leq \lim_{j \to \infty} (\phi(u_{n_j}, t_{n_j}) + 2 \langle  u_{n_j}, Jt_{n_j}-Jt_0 \rangle + (\Vert t_{n_j} \Vert + \Vert t_0 \Vert) \Vert t_{n_j} - t_0 \Vert). \numberthis \label{2}
\end{align*}
Here, by Fr\'echet smoothness of $B$, $\langle u_{n_j}, Jt_{n_j}-Jt_0 \rangle \to 0$ as $j \to \infty$. So, (\ref{2}) reduces to $\phi(u_0, t_0)=\text{dist}_\phi(M, N)$. In a similar way, we also obtain $\phi(v_0, t_0) =  \text{dist}_\phi(M, N)$. Strictly convexity gives $u_0 = v_0$. This results, $$\lim_{j \to \infty} \phi(u_{n_j}, v_{n_j}) = \phi(u_0, v_0) = 0.$$ Since, $(n_j)$ is arbitrary, $\phi(u_n, v_n) \to 0$ as $n \to \infty$. Hence, $(M, N)$ satisfies the property ($\phi$-UC).

\end{proof}

\section{Convergence Analysis} \label{sec3}
 In this section, we establish a strong convergence theorem by introducing the shrinking projection algorithm, which is used to find a common solution of $\phi$-best proximity point set of a non-self proximally weakly suppressive mapping in a Banach space, which is uniformly convex and uniformly smooth. Along the section, we are going to assume that the set $M_0$ is convex. Let us first introduce the following notions required to prove the main conclusion.
 \begin{definition}  [A$\phi$-BPS] \cite{PC}
 Let $(M, N)$ be a nontrivial pair of subsets of a smooth Banach space $B$ and
 $T\colon M \to N$ be a non-self mapping. A sequence $\{u_n\}$ in $M$ is said to be an approximate
 $\phi$-best proximity point sequence (A$\phi$-BPS) for $T$ if and only if $\displaystyle \lim_{n \to \infty} \phi(u_n, Tu_n) = \text{dist}_\phi(M, N)$.
 \end{definition}

Note that the above definition is a modified version of an
approximate best-proximity point sequence as studied in \cite{G} for a metric space. The same paper examined the notion of proximal
property for non-self mapping in a normed linear space. We define it as follows: for two nontrivial subsets $M$, $N$ of a normed linear space $B$, a non-self mapping $T\colon  M \to N$ is said to have the proximal property if for each sequence $\{u_n\}$ in $M$ with $u_n \xrightarrow{w} u \in M$ and $\Vert u_n - Tu_n \Vert \to d(M, N)$, $\Vert u - Tu \Vert = d(M, N)$, where $d(M, N)=\inf \{d(u, v) : u \in M, v \in N \}$. Employing this property, Suparatulatorn {\it et al}. \cite{RWS} obtained a strong convergence result of a new hybrid scheme for best proximity points in a Hilbert space. Here, we give a modified definition of the proximal property via the generalized projection operator in a smooth Banach space.

  \begin{definition} [$\phi$-proximal property] \cite{PC}
  Let $(M, N)$ be a pair of nontrivial subsets of a smooth Banach space $B$.
  A non-self mapping $T\colon M \to N$ is said to satisfy the $\phi$-proximal
  property if and only if for each sequence $\{u_n\}$ in $M$ such that
  $u_n \xrightarrow{w} u_0 \in M$ and $\displaystyle \lim_{n \to \infty} \phi(u_n, Tu_n)=\text{dist}_\phi(M, N)$, we have $\phi(u_0, Tu_0)=\text{dist}_\phi(M, N)$.
  \end{definition}

  If $\text{dist}_\phi(M, N) = 0$ and $B$ is smooth, and strictly convex, then the $\phi$-proximal property
  reduces to the demi-closedness principle of $I - T$ at 0, where $I$ is the identity operator on $M$.
  Recall that the mapping $I-T \colon  M \to B$ is demi-closed at 0 if whenever $\{u_n\}$ is a
  sequence in $M$ such that $u_n \xrightarrow{w} u_0 \in M$ and $(I-T) u_n \to 0$ as $n \to \infty$, we have $(I-T)u_0 = 0$.
\begin{definition} \label{d}
Let $(M, N)$ be a non-empty pair of subsets of a smooth Banach space $B$. Then, $T: M \to N$ is said to be
\begin{enumerate}
\item \cite{A1} {\it weakly suppressive mapping} if 
$$
\phi(Tu_1, Tu_2) \leq \phi(u_1, u_2) - \eta(\phi(u_1, u_2)), \forall u_1, u_2 \in M,
$$ 
where $\eta : [0, \infty) \to [0, \infty)$ is a continuous and non-decreasing function such that $\eta$ is positive on $(0, \infty)$, $\eta(0)=0$ and $\lim_{t \to \infty} \eta(t)=\infty$.

\item { \it proximally weakly suppressive mapping} if and only if 
$$
 \left.\begin{aligned}
   \phi(u_1, Tv_1) &=\text{dist}_\phi(M, N) \\
   \phi(u_2, Tv_2) &=\text{dist}_\phi(M, N)
 \end{aligned}\right\}\Rightarrow \phi(u_1, u_2) \leq \phi(v_1, v_2) - \eta(\phi(v_1, v_2)),
$$
where $u_1, u_2, v_1, v_2 \in M$.
\end{enumerate}   
\end{definition}
In a Hilbert space, the notions of weakly suppressive and proximally weakly suppressive mappings generalized into weakly contractive and proximally weakly contractive mappings, respectively (see \cite{V, GM} for more details).

Let us now provide a shrinking projection scheme for finding the $\phi$-best proximity points in the setting of a uniformly smooth Banach space that is also uniformly convex.

\begin{theorem} \label{main}
Let $(M, N)$ be a pair of nontrivial, closed, convex subsets of a uniformly convex and uniformly smooth Banach space $B$. Let $T: M \to N$ be a proximally weakly suppressive mapping satisfying $T(M_0) \subseteq N_0$ and $\text{Best}_M^{\phi}(T)$ is nontrivial. Let us consider the sequence $\{u_n\}$ generated by 
\begin{align*}
 \begin{cases}
    u_1=\Pi_{M_0} u_0, u_0 \in M_0 ~ \text{be arbitrary}, \\
    H_1=M_0, \\
    v_n=J^{-1}(\alpha_n J u_n +(1-\alpha_n) J \Pi_M Tu_n), n \in \mathbb{N},\\
    H_{n+1} = \{ z \in H_n : \phi(z, v_n) \leq \phi(z, u_n) \}, \\
    u_{n+1} = \Pi_{H_{n+1}} u_0, 
    \end{cases} \numberthis \label{Piw}
\end{align*}
for each $n \in \mathbb{N}$, where $\alpha_n \in [0, a]$, for some $a \in [0, 1)$. Then, if $T$ satisfies the $\phi$-proximal property, $\{u_n\}$ strongly converges to $u^*=\Pi_{\text{Best}_M^\phi(T)} u_0 $.
\end{theorem}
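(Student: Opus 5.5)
We follow the standard shrinking projection scheme of Theorem \ref{alg-thm1}. The idea is to use the proximally weakly suppressive condition as a replacement for the $\phi_p$-property wherever that property was used to compare $\Pi_M Tu$ with a $\phi$-best proximity point.

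\emph{Step 1: the sets $H_n$ and $\mathrm{Best}_M^\phi(T)$.} By induction, each $H_n$ is closed and convex. Closedness is clear. For convexity, the condition $\phi(z,v_n)\le\phi(z,u_n)$ is equivalent to $2\langle z, Ju_n-Jv_n\rangle\le \|u_n\|^2-\|v_n\|^2$, which is affine in $z$; here $M_0$ is convex by the standing assumption. We also need $\mathrm{Best}_M^\phi(T)$ closed and convex, so that $u^*$ is well defined. Closedness follows the argument of the closedness theorem proved for nonextensive maps: a proximally weakly suppressive $T$ satisfies $\phi(\Pi_MTu_1,\Pi_MTu_2)\le\phi(u_1,u_2)$ on $M_0$ by Lemma \ref{lem*}, since $Tu_i\in N_0$. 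For any $p,q\in \mathrm{Best}_M^\phi(T)$ we have $\phi(p,Tp)=\phi(q,Tq)=\mathrm{dist}_\phi$, so the definition gives $\phi(p,q)\le \phi(p,q)-\eta(\phi(p,q))$. Hence $\eta(\phi(p,q))=0$ and $p=q$ by ($\phi$3). Thus $\mathrm{Best}_M^\phi(T)=\{p\}$ is a singleton, which trivially settles convexity and closedness, and $u^*=p$.

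\emph{Step 2: $p\in H_n$ for all $n$.} Take $u_n\in H_n\subseteq M_0$. Then $Tu_n\in N_0$, and Lemma \ref{lem*} gives $\phi(\Pi_MTu_n,Tu_n)=\mathrm{dist}_\phi$. Also $\phi(p,Tp)=\mathrm{dist}_\phi$, and uniqueness of the generalized projection yields $p=\Pi_MTp$. The suppressive condition then gives $\phi(p,\Pi_MTu_n)\le\phi(p,u_n)-\eta(\phi(p,u_n))\le\phi(p,u_n)$. By convexity of $\|\cdot\|^2$ in the dual,
\[
\phi(p,v_n)\le \alpha_n\phi(p,u_n)+(1-\alpha_n)\phi(p,\Pi_MTu_n)\le\phi(p,u_n),
\]
so $p\in H_{n+1}$ by induction.

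\emph{Step 3: convergence of $\{u_n\}$.} Since $u_n=\Pi_{H_n}u_0$ and $H_{n+1}\subseteq H_n$, Lemma \ref{imp3} shows that $\phi(u_n,u_0)$ is nondecreasing and bounded by $\phi(p,u_0)$. Hence $\{u_n\}$ is bounded and $\phi(u_m,u_n)\le \phi(u_m,u_0)-\phi(u_n,u_0)\to0$ for $m\ge n$. Lemma \ref{imp1} then makes $\{u_n\}$ Cauchy, so $u_n\to \bar u\in M_0$. Since $u_{n+1}\in H_{n+1}$, we have $\phi(u_{n+1},v_n)\le\phi(u_{n+1},u_n)\to0$, so $\|u_{n+1}-v_n\|\to0$ and therefore $\|u_n-v_n\|\to0$. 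Uniform smoothness gives $\|Ju_n-Jv_n\|\to0$. Since $(1-\alpha_n)(Ju_n-J\Pi_MTu_n)=Ju_n-Jv_n$ and $1-\alpha_n\ge 1-a>0$, we obtain $\|Ju_n-J\Pi_MTu_n\|\to0$. Uniform continuity of $J^{-1}$ on bounded sets (valid because $B$ is uniformly convex) then gives $\|u_n-\Pi_MTu_n\|\to0$. Boundedness of $\Pi_MTu_n$ comes from the bound in Step 2.

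\emph{Step 4: identifying the limit.} Expanding via ($\phi$2),
\[
\phi(u_n,Tu_n)=\phi(u_n,\Pi_MTu_n)+\phi(\Pi_MTu_n,Tu_n)+2\langle u_n-\Pi_MTu_n, J\Pi_MTu_n-JTu_n\rangle.
\]
Here $\phi(\Pi_MTu_n,Tu_n)=\mathrm{dist}_\phi$. The remaining terms tend to $0$ provided $\{Tu_n\}$ is bounded. This we expect to be the main obstacle, since $\phi$-boundedness of $\Pi_MTu_n$ does not automatically bound $Tu_n$. We would try to handle it using ($\phi$1) together with $\phi(\Pi_MTu_n,Tu_n)=\mathrm{dist}_\phi$, which gives $\|Tu_n\|\le\|\Pi_MTu_n\|+\sqrt{\mathrm{dist}_\phi}$. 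Thus $\{u_n\}$ is an A$\phi$-BPS, and $u_n\to\bar u$, in particular weakly. The $\phi$-proximal property gives $\bar u\in\mathrm{Best}_M^\phi(T)$, so $\bar u=p=u^*$. An alternative route to the same conclusion: passing to the limit in $\phi(p,u_0)\ge\phi(u_n,u_0)$ gives $\phi(\bar u,u_0)\le\phi(p,u_0)$, and the uniqueness of the projection onto the singleton confirms $u_n\to\Pi_{\mathrm{Best}_M^\phi(T)}u_0$.
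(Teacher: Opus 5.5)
Your proof is correct. Its skeleton matches the paper's: $H_n\supseteq\text{Best}_M^\phi(T)$, monotone bounded $\phi(u_n,u_0)$, A$\phi$-BPS, then identifying the limit. The convergence part, however, runs differently.

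\textbf{How the routes differ.} The paper takes a weakly convergent subsequence and puts its limit in $\text{Best}_M^\phi(T)$ via the $\phi$-proximal property. It then identifies that limit as $\Pi_{\text{Best}_M^\phi(T)}u_0$ by weak lower semicontinuity, upgrades to norm convergence with property (KK), and adds a separate $\eta$-argument. You instead get norm convergence of the whole sequence at once from $\phi(u_m,u_n)\le\phi(u_m,u_0)-\phi(u_n,u_0)$ and Lemma \ref{imp1}. This removes (KK), weak compactness and the subsequence bookkeeping.

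\textbf{Points where your argument is tighter than the paper's.}
Taking $u_1=v_1=p$, $u_2=v_2=q$ in the suppressive condition shows $\text{Best}_M^\phi(T)$ is a singleton. The paper never observes this. It is what makes $\Pi_{\text{Best}_M^\phi(T)}u_0$ well defined, since the paper does not check that this set is closed and convex.
Your derivation of $\Vert u_n-\Pi_MTu_n\Vert\to 0$ is sound: it uses $Ju_n-Jv_n=(1-\alpha_n)(Ju_n-J\Pi_MTu_n)$, $1-\alpha_n\ge 1-a$, and uniform continuity of $J^{-1}$ on bounded sets. The paper instead tries to read $\phi(u_n,\Pi_MTu_n)\to 0$ off $\phi(u_n,v_n)\le(1-\alpha_n)\phi(u_n,\Pi_MTu_n)$, which bounds the wrong side and does not give that conclusion.
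You also supply the bound $\Vert Tu_n\Vert\le\Vert\Pi_MTu_n\Vert+\sqrt{\text{dist}_\phi(M,N)}$, which the paper uses tacitly.

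\textbf{One correction.} Your closing ``alternative route'' is not independent. The inequality $\phi(\bar u,u_0)\le\phi(p,u_0)$ identifies $\bar u$ with $p$ only once you already know $\bar u\in\text{Best}_M^\phi(T)$, which again comes from the $\phi$-proximal property.

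A genuine alternative is essentially the paper's last paragraph. Since $\Vert u_n-\Pi_MTu_n\Vert\to 0$ with both sequences bounded, the right-hand side of $\eta(\phi(p,u_n))\le\phi(p,u_n)-\phi(p,\Pi_MTu_n)$ tends to $0$. By monotonicity and positivity of $\eta$, $\phi(p,u_n)\to 0$, so $u_n\to p$ by Lemma \ref{imp1}. This route needs neither the $\phi$-proximal property nor your Cauchy step.

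Like the paper, you take closedness of $H_1=M_0$ for granted rather than deriving it.
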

\begin{proof}
Let us divide the proof into four steps.

{\sc Step 1}: In the first step, we prove that $H_n$ is closed and
convex, for all $n \in \mathbb{N} \cup \{0\}$. Clearly, $H_1=M_0$,
which is closed and convex. Assume that $H_k$ is closed and convex,
for some $k \in \mathbb{N}$. Then, for all $z \in H_{k+1}$, we have
$\phi(z, v_k) \leq \phi(z, u_k)$, which is equivalent to
$$
2 \langle z, Ju_k - Jv_k \rangle \leq \Vert u_k \Vert^2 - \Vert x_k \Vert^2.
$$
This characterization allows us the conclusion that  $H_{k+1}$ is
closed and convex, and hence for all $n \in \mathbb{N}$, $H_n$ is
closed and convex. This also shows that $\Pi_{H_{n+1}} u$ is
well-defined.\\
 Next, we show that $ \text{Best}_M^{\phi}(T) \subset H_n$, for all $n \in
\mathbb{N}$, which also implies that $H_n$ is  not void. For $n=1$,
$\text{Best}_M^{\phi}(T) \subset H_1 = M_0$. Let us assume that $\text{Best}_M^{\phi}(T) \subset H_k$, for some $k \in \mathbb{N}$. Then, for $p \in \text{Best}_M^\phi(T)$, we have
\begin{align*}
\phi(p, v_k) & = \phi(p, J^{-1}(\alpha_k J u_k +(1-\alpha_k) J \Pi_M Tu_k)) \\
& \leq \Vert p \Vert^2 - 2 \alpha_k \langle p, u_k \rangle - 2(1-\alpha_k) \langle p, \Pi_{M_0} Tu_k \rangle + \alpha_k \Vert u_k \Vert^2 +(1-\alpha_k) \Vert \Pi_M Tu_k \Vert^2 \\
& = \alpha_k \phi(p, u_k) +(1-\alpha_k) \phi(p, \Pi_M Tu_k). \numberthis \label{s1}
\end{align*}
Now, we have $\phi(p, Tp)=\text{dist}_\phi(M, N)$ and $\phi(\Pi_M Tu_k, Tu_k)=\text{dist}_\phi(M, N)$. By using the fact that $T$ is proximally weakly suppressive mapping, we conclude that
\begin{align*}
\phi(p, \Pi_M Tu_k) \leq \phi(p, u_k) - \eta \phi(p, u_k). \numberthis \label{s2}
\end{align*}
Substituting (\ref{s2}) in (\ref{s1}), we deduce that
\begin{align*}
\phi(p, v_k) & \leq \alpha_k \phi(p, u_k) +(1-\alpha_k) [ \phi(p, u_k)-\eta (\phi(p, u_k))] \\
& = \phi(p, u_k) - (1-\alpha_k) \eta (\phi(p, u_k)) \\
& \leq \phi(p, u_k).
\end{align*}
This implies that $p \in H_{k+1}$. Thus, $F^\phi \subset H_n$, for
all $n \in \mathbb{N}$.

{\sc Step 2}: In this stage, our goal is to show that $\lim_{n \to
\infty} \phi(u_n, u_0)$ exists.
From $u_n = \Pi_{H_n} u_0$ and by Lemma \ref{imp2}, we deduce that
$$\langle u_n - p, Ju_0 - Ju_n \rangle \geq 0,\,\, \mbox{for
all}\,\, p \in F^\phi.$$
Also, by Lemma \ref{imp3}, it follows that
\begin{align*}
\phi(u_n, u_0) = \phi(\Pi_{H_n} u_0, u_0) \leq \phi(p, u_0) -
\phi(p, u_n) \leq \phi(p, u_0),
\end{align*}
for each $p \in F^\phi \subset H_n$, for all $n \in \mathbb{N}$.
Thus, the sequence $\{ \phi(u_n, u_0) \}$ is bounded, and from the
inequality $$(\Vert u_n \Vert - \Vert u_0 \Vert)^2 \leq \phi(u_n,
u_0),$$
 it follows that $\{u_n\}$ is a bounded sequence.
From $u_{n+1}=\Pi_{H_{n+1}} u_0 \in H_{n+1} \subset H_n$, one has
\begin{align*}
\phi(u_n, u_0) \leq \phi(u_{n+1}, u_0),\,\, \mbox{for all}\,\, n \in
\mathbb{N} \cup \{0\}.
\end{align*}
As a result, $\{ \phi(u_n, u_0) \}$ is non-decreasing and bounded.
Hence the limit of $\{\phi(u_n, u_0)\}$ exists.

{\sc Step 3}: We next claim that $\lim_{n\to\infty}\phi(u_n, Tu_n) =
\text{dist}_\phi(M, N)$.
 Now, from Lemma \ref{imp3}, it follows that
\begin{align*}
\phi(u_{n+1}, u_{n}) & = \phi(u_{n+1}, \Pi_{H_{n}}u_0) \leq
\phi(u_{n+1}, u_0) - \phi(u_{n}, u_0).
\end{align*}
So, letting $n \to \infty$, we have
\begin{align*}
\lim_{n \to \infty} \phi(u_{n+1}, u_{n}) = 0.
\end{align*}
Thus, by Lemma \ref{imp1},
\begin{align*}
\lim_{n \to \infty} \Vert u_{n+1} - u_{n} \Vert = 0. \numberthis
\label{s6}
\end{align*}
Noticing that $u_{n+1} \in H_{n+1}$, we get $\phi(u_{n+1}, v_n) \leq
\phi(u_{n+1}, u_n).$ It follows that $ \lim_{n \to \infty}
\phi(u_{n+1}, v_n) =0$, and, from  Lemma \ref{imp1}, we get that
\begin{align*}
\lim_{n \to \infty} \Vert u_{n+1} - v_n \Vert = 0.\numberthis \label{s7^*}
\end{align*}
 From \eqref{s6} and \eqref{s7^*},
we have
\begin{align*}
\lim_{n \to \infty} \Vert u_n - v_n \Vert = 0. \numberthis \label{s7}
\end{align*}
Noticing that $J$ is uniformly norm-to-norm continuous on bounded sets, it follows that
\begin{align*}
\lim_{n \to \infty} \Vert Ju_n - Jv_n \Vert = 0. \numberthis \label{s8}
\end{align*}
Now, 
\begin{align*}
\phi(u_n, v_n) & = \phi(u_n, J^{-1}(\alpha_n J u_n +(1-\alpha_n) J \Pi_M Tu_n)) \\
& \leq \alpha_n \phi(u_n, u_n)+(1-\alpha_n) \phi(u_n, \Pi_M Tu_n) \\
& = (1-\alpha_n) \phi(u_n, \Pi_M Tu_n). \numberthis \label{s9}
\end{align*}
Substituting (\ref{s7}) in (\ref{s9}) and using the fact that $\limsup_{n \to \infty} \alpha_n <1$, we get
\begin{align*}
\lim_{n \to \infty} \phi(u_n, \Pi_M Tu_n)=0. \numberthis \label{s10}
\end{align*}
 Then, we get
\begin{align*}
\phi(u_n, \Pi_M Tu_n) & = \Vert u_n \Vert^2 - 2\langle u_n, J\Pi_M Tu_n \rangle + \Vert \Pi_M Tu_n \Vert^2 \\
 & = \Vert u_n \Vert^2 - \Vert \Pi_M Tu_n \Vert^2 - 2 \langle u_n - \Pi_M Tu_n, J \Pi_M Tu_n \rangle \\
 & \leq (\Vert u_n - \Pi_M Tu_n \Vert)(\Vert u_n \Vert + \Vert \Pi_M Tu_n \Vert) - 2 \langle u_n - \Pi_M Tu_n, J \Pi_M Tu_n \rangle \\
 & \leq (\Vert u_n - \Pi_M Tu_n \Vert)(\Vert u_n \Vert + \Vert \Pi_M Tu_n \Vert) + 2 \Vert u_n - \Pi_M Tu_n \Vert \Vert J \Pi_M Tu_n \Vert.
\end{align*}
 Since $\{u_n\}$ is bounded, $\{\Pi_MTu_n\}$ is bounded, and,  as $J$ is continuous norm-to-norm, $\{J \Pi_M Tu_n\}$ is bounded. Now, using
\eqref{s10}, we conclude that
\begin{align*}
 \lim_{n \to \infty} \phi(u_n, \Pi_M Tu_n)=0. \numberthis \label{s12}
\end{align*}
A straightforward computation yields
\begin{align*}
\phi(u_n, Tu_n) & = \phi(u_n, \Pi_M Tu_n) + \phi(\Pi_M Tu_n, Tu_n) + 2 \langle u_n - \Pi_M Tu_n, J \Pi_M Tu_n - J Tu_n \rangle. \numberthis \label{s13}
\end{align*}
Letting $n$ to infinity and using \eqref{s10}, equality
\eqref{s13} reduces to 
\begin{align*}
\lim_{n \to \infty} \phi(u_n, Tu_n) =\lim_{n \to \infty} \phi(\Pi_M Tu_n, Tu_n)=\text{dist}_\phi(M, N) .
\end{align*}
Thus, $\{u_n\}$ is an (A$\phi$-BPS) with respect to the mapping $T$.

{\sc Step 4}: In the last part, we prove that $\{u_n\}$ strongly
converges to $\Pi_{\text{Best}_M^{\phi}(T)}u_0$. Since $\{u_n\}$ is bounded, there
exists a weakly convergent subsequence, say $\{u_{n_k} \}$ of
$\{u_n\}$, such that $u_{n_k} \xrightarrow{w} q \in M_0$. Observing
the fact that $T$ satisfy the $\phi$-proximal property, $q
\in \text{Best}_M^{\phi}(T)$. \\
 Let
$u^*=\Pi_{\text{Best}_M^\phi (T)} u_0$. Since the norm is weakly lower
semicontinuous, we deduce that
\begin{align*}
\phi(q, u_0) & = \Vert q \Vert^2 - 2 \langle q, Ju_0 \rangle + \Vert u_0\Vert^2  \leq \liminf_{k \to \infty} \phi(u_{n_k}, u_0) \leq \limsup_{k \to \infty} \phi(u_{n_k}, u_0)  \leq \phi(u^*, u_0).
\end{align*}
From the definition of $\Pi$, we have $q=u^*$. Hence,
\begin{align*}
\lim_{k \to \infty} \phi(u_{n_k}, u_0) = \phi(u^*, u_0).
\end{align*}
So,
\begin{align*}
0=\lim_{k \to \infty} (\phi(u_{n_k}, u_0) - \phi(u^*, u_0)) = \lim_{k \to \infty} (\Vert u_{n_k} \Vert^2 - \Vert u^* \Vert^2),
\end{align*}
resulting in, $\Vert u_{n_k} \Vert \to \Vert u^* \Vert$ as $k \to \infty$.
Noticing that $B$ has property (KK), we have $u_{n_k} \to u^*$. Hence $\{u_n\}$ converges strongly to $\Pi_{\text{Best}_M^\phi (T)} u_0$. \\
By the definition of proximally weakly suppressive mapping and property of $\Pi_M$, we have for $u^* \in \text{Best}_M^\phi (T)$, 
$$ \phi(u^*, T u^*) = \text{dist}_\phi(M, N) ~ \text{and} ~ \phi(\Pi_M Tu_n, Tu_n)=\text{dist}_\phi(M, N).
$$
So, $\phi(u^*, \Pi_M Tu_n) \leq \phi(u^*, u_n) - \eta (\phi(u^*, u_n))$.
This implies that 
\begin{align*}
\eta (\phi(u^*, u_n)) & \leq \phi(u^*, u_n) - \phi(u^*, \Pi_M Tu_n) \\
& = \Vert u_n \Vert^2 - \Vert \Pi_M Tu_n \Vert^2 + 2 \langle u^*, J \Pi_M Tu_n \rangle - 2 \langle u^*, Ju_n \rangle \\
& \leq \Vert u_n - \Pi_M Tu_n \Vert (\Vert u_n \Vert + \Vert \Pi_M Tu_n \Vert)+ 2 \Vert u^* \Vert \Vert J \Pi_M Tu_n - J u_n \Vert.
\end{align*}
Since $\lim_{n \to \infty} \Vert u_n - \Pi_M Tu_n \Vert =0$ and $\{\Pi_M Tu_n \}$ is bounded, by the uniform norm-norm continuity of $J$ on bounded subsets of $B$, we obtain that 
$$ \lim_{n \to \infty} \eta (\phi(u^*, u_n)) =0.$$
By the properties of $\eta$, it follows that $\lim_{n \to \infty} \phi(u^*, u_n) =0$ and so, $\lim_{n \to \infty} \Vert u_n - u^* \Vert =0$. i.e., $u_n \to u^*$. On the account of the uniqueness of the limit of $\{u_n\}$, we find that $u^* = \Pi_{\text{Best}_M^\phi(T)} u_0$.

\end{proof}

\begin{theorem} [In Hilbert space]
Let $(M, N)$ be a pair of nonempty, closed, convex subsets of a Hilbert space $\mathbb{H}$. Let $T: M \to N$ be a proximally weakly suppressive mapping satisfying $T(M_0) \subseteq N_0$ and that $T$ has the proximal property. Let us consider the sequence $\{u_n\}$ generated by 
\begin{align*}
 \begin{cases}
    u_1=P_{M_0} u_0, u_0 \in M_0 ~ \text{be arbitrary}, \\
    H_1=M_0, \\
    v_n=\alpha_n  u_n +(1-\alpha_n)  P_M Tu_n, n \in \mathbb{N},\\
    H_{n+1} = \{ z \in H_n : \Vert z- v_n \Vert \leq \Vert z -u_n \Vert \}, \\
    u_{n+1} = P_{H_{n+1}} u_0, 
    \end{cases} \numberthis \label{Pi2}
\end{align*}
for each $n \in \mathbb{N}$, where $\alpha_n \in [0, a]$, for some $a \in [0, 1)$. If $\text{Best}_M(T)$ is nonempty, then the sequence $\{u_n\}$ strongly converges to $u^*=P_{\text{Best}_M(T)} u_0$.
\end{theorem}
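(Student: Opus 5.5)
The plan is to obtain the Hilbert space statement as a specialization of Theorem \ref{main} rather than reprove it from scratch. A Hilbert space $\mathbb{H}$ is uniformly convex and uniformly smooth, and its duality mapping is $J=I$. Hence $\phi(u,v)=\Vert u-v\Vert^2$ and the generalized projection $\Pi_K$ coincides with the metric projection $P_K$. Under this dictionary the following identifications hold:
\begin{itemize}
\item $\text{dist}_\phi(M,N)=d(M,N)^2$;
\item the pair $(M_0,N_0)$ and $\text{Best}_M^\phi(T)$ coincide with the classical proximal pair and $\text{Best}_M(T)$;
\item $v_n=J^{-1}(\alpha_nJu_n+(1-\alpha_n)J\Pi_MTu_n)=\alpha_nu_n+(1-\alpha_n)P_MTu_n$;
\item the half-space condition $\phi(z,v_n)\le\phi(z,u_n)$ is equivalent to $\Vert z-v_n\Vert\le\Vert z-u_n\Vert$.
\end{itemize}
So the scheme \eqref{Pi2} is literally the scheme \eqref{Piw}.

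Next I will check the hypotheses of Theorem \ref{main}. The proximal property of $T$ in the metric sense transfers to the $\phi$-proximal property: if $u_n\xrightarrow{w}u_0$ and $\phi(u_n,Tu_n)=\Vert u_n-Tu_n\Vert^2\to d(M,N)^2$, then $\Vert u_n-Tu_n\Vert\to d(M,N)$. The proximal property then gives $\Vert u_0-Tu_0\Vert=d(M,N)$, i.e. $\phi(u_0,Tu_0)=\text{dist}_\phi(M,N)$. The proximally weakly suppressive condition is stated directly in terms of $\phi$, so it is unchanged. The conditions $T(M_0)\subseteq N_0$ and nonemptiness of $\text{Best}_M(T)$ are assumed. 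Since the section also presumes that $M_0$ is convex, I would note that in a Hilbert space $M_0$ is automatically convex. If $u_i\in M_0$ with partners $v_i\in N$ and $\Vert u_i-v_i\Vert=d(M,N)$, then the midpoint pair $\bigl(\tfrac{u_1+u_2}{2},\tfrac{v_1+v_2}{2}\bigr)$ lies in $M\times N$ and has distance at most $d(M,N)$, hence exactly $d(M,N)$. The same applies to any convex combination. Closedness of $M_0$ follows from the parallelogram law, which forces $u_1-v_1=u_2-v_2$ for any two best pairs; thus $M_0=(N_0+w)\cap M$ for a fixed vector $w$, and a limiting argument closes it.

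With these identifications, Theorem \ref{main} yields $u_n\to\Pi_{\text{Best}_M^\phi(T)}u_0=P_{\text{Best}_M(T)}u_0$, which is the claim.

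The main obstacle is the bookkeeping around $M_0$. Step 1 of Theorem \ref{main} needs $H_1=M_0$ to be closed and convex, and this is only assumed globally in the section, so it has to be verified in the Hilbert setting as sketched. If closedness of $M_0$ turns out to be delicate, I would simply run the argument of Theorem \ref{main} directly with norms. The estimate $\Vert p-v_k\Vert^2\le\alpha_k\Vert p-u_k\Vert^2+(1-\alpha_k)\Vert p-P_MTu_k\Vert^2$ follows from convexity of $\Vert\cdot\Vert^2$. Steps 2–4 then go through verbatim, with Lemma \ref{imp1} replaced by the trivial identity $\phi(u,v)=\Vert u-v\Vert^2$.
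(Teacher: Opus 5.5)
Your proposal is correct and takes the same route as the paper, which simply notes that $\phi(u,v)=\Vert u-v\Vert^2$ in $\mathbb{H}$ and invokes Theorem \ref{main}. Your explicit checks that the metric proximal property yields the $\phi$-proximal property, and that $M_0$ is closed and convex in a Hilbert space, supply hypotheses the paper leaves implicit.
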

\begin{proof}
In a Hilbert space, $\phi(u, v)=\Vert u-v \Vert^2$, for all $u, v \in \mathbb{H}$. So, proximally weakly suppressive mapping coincides with proximally weakly contractive mappings and rest of the proof follows from Theorem \ref{main}.
\end{proof}

\begin{remark}
Theorem \ref{main} improves upon the results of \cite{PC, GMV, RWS, RS} in the following ways:
\begin{itemize}
\item[(i)] It does not require the assumption of the $\phi_p$-property, thereby broadening the class of admissible spaces and mappings.

\item[(ii)] It generalizes existing best proximity point results from Hilbert spaces to more general Banach spaces.

\item[(iii)] It considers proximally weakly suppressive mappings instead of nonextensive mappings, thus allowing a wider class of operators that are typically involved in hybrid-type algorithms.
\end{itemize}
\end{remark}
Let us now propose a projection method to address the split $\phi$-best proximity point and variational problems in a Banach space. 

\begin{theorem} [Weak Convergence Theorem] \label{weak}
Let the pair $(M, N)$ and $K$ be nontrivial, closed, convex subsets of uniformly convex and uniformly smooth Banach spaces $B_1$ and $B_2$, respectively. Let $A: B_1 \to B_2$ be a bounded linear operator with $A^*$ be its dual and $f:K \to B_2^*$ be a $\mu$-inverse strongly monotone operator such that $VIP(f, K) \neq \emptyset$. Let $T: M \to N$ be a proximally weakly suppressive operator such that $T(M_0) \subseteq N_0$ and $\text{Best}_M^\phi (T) \neq \emptyset$. For an arbitrarily choosen $u_0 \in M_0$, let us consider the sequence $\{u_n\}$ iteratively by 
\begin{align*}
 \begin{cases}
    v_n=J_{B_1}^{-1}((1-\alpha_n) J_{B_1} u_n + \alpha_n  J_{B_1} \Pi_M Tu_n), \\
    u_{n+1} = \Pi_M J_{B_1}^{-1}(J_{B_1} v_n + \gamma A^* J_{B_2}(U-I) Av_n), ~ \forall n \geq 0, 
    \end{cases} \numberthis \label{Sp}
\end{align*}
where $U=\Pi_K (J_{B_1} - \lambda f)$ with $\lambda \in [0, 2 \mu), ~ \gamma \in (0, \frac{1}{L})$, $L$ is the spectral radius of $A^* A$, $\alpha_n \subset (0, 1]$ with $\limsup_{n \to \infty} \alpha_n <1$ and $(U-I)$ is demi-closed at zero. Suppose $\Sigma = \{ p \in \text{Best}_M^\phi (T) : x=Ap \in VIP(f, K) \} \neq \emptyset$ and $T$ satisfies the $\phi$-proximal property, then $\{u_n\}$ converges weakly to an element $u^* \in \Sigma$.
\end{theorem}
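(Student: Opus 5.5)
The plan is a standard Fejér-monotonicity argument with respect to $\phi$, in the spirit of split feasibility schemes. Fix $p\in\Sigma$, so $p\in \text{Best}_M^\phi(T)$ and $Ap\in VIP(f,K)$. We will take it as understood that $Ap$ is a fixed point of $U$; this is the usual characterization of $VIP(f,K)$ via Lemma \ref{imp2}, with $U$ read as $\Pi_K J^{-1}(J-\lambda f)$. The whole argument then rests on three estimates.

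\textbf{Step 1: the $v_n$ update.} Convexity of $\|\cdot\|^2$ in the $J$-representation, as in (\ref{s1}), gives
$\phi(p,v_n)\le (1-\alpha_n)\phi(p,u_n)+\alpha_n\phi(p,\Pi_M Tu_n)$. The proximally weakly suppressive property, applied as in (\ref{s2}) with $\phi(\Pi_MTu_n,Tu_n)=\text{dist}_\phi(M,N)=\phi(p,Tp)$ (Lemma \ref{lem*}, since $Tu_n\in N_0$ once $u_n\in M_0$), then yields
$\phi(p,v_n)\le \phi(p,u_n)-\alpha_n\eta(\phi(p,u_n))$.
Using the sharper Lemma \ref{imp4} in place of plain convexity, this also carries a term $-\alpha_n(1-\alpha_n)\psi(\|Ju_n-J\Pi_MTu_n\|)$.

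\textbf{Step 2: the $u_{n+1}$ update.} Put $w_n=J^{-1}(Jv_n+\gamma A^*J(U-I)Av_n)$. Since $p\in M$, Lemma \ref{imp3} gives $\phi(p,u_{n+1})\le\phi(p,w_n)$. Expanding $\phi(p,w_n)$ through the convex conjugate (the $V$-functional of Alber) and using the uniform smoothness inequality for $\|\cdot\|_*^2$ on bounded sets, the cross term is
$2\gamma\langle Av_n-Ap, J(U-I)Av_n\rangle$.
The aim is to show this cross term is at most $-2\gamma\|(U-I)Av_n\|^2$. For that we need $U$ to be of firmly-nonexpansive type relative to $\phi$ at fixed points; this is where $\lambda<2\mu$ and the $\mu$-ISM hypothesis enter, via Lemma \ref{imp2} at $Ap$. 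The quadratic remainder is bounded by $\gamma^2 L\|(U-I)Av_n\|^2$ using $\|A^*\|^2=L$. Since $\gamma<1/L$, this produces
$\phi(p,u_{n+1})\le \phi(p,u_n)-\alpha_n\eta(\phi(p,u_n))-c\|(U-I)Av_n\|^2$ for some $c>0$.

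This is the step I expect to be the main obstacle. In a non-Hilbert space the exact identity becomes only an inequality involving the modulus of smoothness of $B_1^*$. I would first try to absorb the error through boundedness of the iterates, possibly shrinking the admissible range of $\gamma$ implicitly.

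\textbf{Step 3: consequences and identification of weak limits.} From Step 2, $\{\phi(p,u_n)\}$ is nonincreasing, hence convergent, so $\{u_n\}$ is bounded. Moreover $\|(U-I)Av_n\|\to0$, and $\sum\alpha_n\psi(\cdots)<\infty$. Since $\limsup\alpha_n<1$ (and, I will assume, $\liminf\alpha_n>0$ where needed), we get $\|u_n-\Pi_MTu_n\|\to0$. The computation of Step 3 in Theorem \ref{main} then shows that $\{u_n\}$ is an A$\phi$-BPS, and also that $\|u_n-v_n\|\to0$.

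Now take any weak cluster point $q$ of $\{u_{n_k}\}$. The $\phi$-proximal property gives $q\in\text{Best}_M^\phi(T)$. Since $v_{n_k}\xrightarrow{w}q$ and $A$ is bounded linear, $Av_{n_k}\xrightarrow{w}Aq$, so demi-closedness of $U-I$ at zero gives $Aq\in VIP(f,K)$. Hence $q\in\Sigma$.

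\textbf{Step 4: uniqueness of the weak cluster point.} Suppose $q_1,q_2\in\Sigma$ are two weak cluster points. The identity
$\phi(q_1,u_n)-\phi(q_2,u_n)=\|q_1\|^2-\|q_2\|^2-2\langle q_1-q_2,Ju_n\rangle$
has a limit because both $\phi(q_i,u_n)$ converge. Using weak sequential continuity of $J$, which I would add as a standing assumption as in the Opial-type arguments of this literature, passing to the two subsequences gives $\langle q_1-q_2,Jq_1-Jq_2\rangle=0$. Strict convexity then forces $q_1=q_2$. Therefore $u_n\xrightarrow{w}u^*\in\Sigma$.
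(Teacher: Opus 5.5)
Your route is the same as the paper's. You establish Fej\'er monotonicity of $\phi(p,u_n)$ as in (\ref{sp1})--(\ref{sp5}), then show $\Vert u_n-\Pi_MTu_n\Vert\to0$ and $\Vert(U-I)Av_n\Vert\to0$, derive the A$\phi$-BPS property, and identify weak cluster points via the $\phi$-proximal property and demiclosedness of $U-I$.

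\textbf{The main gap is Step~2.} Everything downstream rests on it: boundedness, $\Vert(U-I)Av_n\Vert\to0$, and the limits of $\phi(q,u_n)$ used in your Step~4. You do not supply this step, and the repair you sketch would not supply it.
\begin{itemize}
\item Put $x=Av_n$, $y=J_{B_2}^{-1}(J_{B_2}x-\lambda f(x))$ and $Ux=\Pi_Ky$. Lemma~\ref{imp2} at $Ap\in K$ gives $\langle Ux-Ap,\,J_{B_2}x-J_{B_2}Ux-\lambda f(x)\rangle\ge0$. This controls pairings with a \emph{difference of $J$-values}. The scheme, however, pairs $Ux-Ap$ with $J_{B_2}(Ux-x)$. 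Since $J_{B_2}$ is additive only when $B_2$ is a Hilbert space, no sign for your cross term follows.
\item Your target is too strong even in $\mathbb{R}$. Take $f(x)=x/\mu$, which is $\mu$-ISM, and $\lambda\in(\mu,2\mu)$. Then $U=(1-\lambda/\mu)I$ violates $\langle x,(U-I)x\rangle\le-\Vert(U-I)x\Vert^2$. What is needed, and what the paper asserts in (\ref{sp3}), is only the quasi-nonexpansive bound with constant $-\frac12$. That is exactly what $\gamma<1/L$ is calibrated for.
\item A quadratic remainder $C\gamma^2L\Vert(U-I)Av_n\Vert^2$ requires $B_1^*$ to be $2$-uniformly smooth, i.e.\ $B_1$ to be $2$-uniformly convex; there, shrinking $\gamma$ does work. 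For a general uniformly convex $B_1$ (e.g.\ $\ell^p$, $p>2$), the remainder can be of order $t^{q}$ with $q<2$, where $t=\Vert(U-I)Av_n\Vert$. No choice of $\gamma$ then lets $-c\gamma t^2$ absorb it as $t\to0$.
\end{itemize}
The paper does not resolve this either: (\ref{sp2})--(\ref{sp3}) expand $\Vert J_{B_1}v_n+\gamma A^*J_{B_2}(U-I)Av_n\Vert^2$ and the cross term as in an inner-product space. Your diagnosis is sound, but as written the proposal contains no proof of the Fej\'er inequality.

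\textbf{Step~4 imports an unnecessary hypothesis.} You assume weak sequential continuity of $J$, which the theorem does not assume and does not need. The missing observation is that proximal weak suppressiveness makes $\text{Best}_M^\phi(T)$ a singleton. For $p,q\in\text{Best}_M^\phi(T)$, take $u_1=v_1=p$ and $u_2=v_2=q$ in Definition~\ref{d}(2). This gives $\eta(\phi(p,q))\le0$, so $\phi(p,q)=0$ and $p=q$ by ($\phi$3). Hence $\Sigma=\{p\}$. Once all weak cluster points of the bounded sequence $\{u_n\}$ lie in $\Sigma$, reflexivity gives $u_n\xrightarrow{w}p$.

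The $\eta$-term you already carry gives more. Steps~1--2 yield $\alpha_n\eta(\phi(p,u_n))\le\phi(p,u_n)-\phi(p,u_{n+1})$. Since $\phi(p,u_n)$ decreases to some $\ell$ and $\eta$ is nondecreasing and positive on $(0,\infty)$, $\ell>0$ would force $\eta(\ell)\sum_n\alpha_n<\infty$. So if $\sum\alpha_n=\infty$ (in particular under your $\liminf\alpha_n>0$), then $\phi(p,u_n)\to0$, and Lemma~\ref{imp1} gives strong convergence, as at the end of the proof of Theorem~\ref{main}. The paper's proof stops once weak cluster points are shown to lie in $\Sigma$, so this closing step is only implicit there.

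\textbf{Two smaller remarks.}
\begin{itemize}
\item Your added condition $\liminf\alpha_n>0$ is genuinely needed to pass from (\ref{sp6}) to $\Vert u_n-\Pi_MTu_n\Vert\to0$; the paper uses it tacitly.
\item Invoking Lemma~\ref{lem*} needs $Tu_n\in N_0$, hence $u_n\in M_0$. The update $u_{n+1}=\Pi_M(\cdot)$ does not guarantee this, and neither your argument nor the paper's verifies it.
\end{itemize}
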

\begin{proof}
Let $p \in \Sigma$. Then $p \in \text{Best}_M^\phi(T)$. i.e., $\phi(p, Tp) = \text{dist}_\phi(M, N)$. We also know by Lemma \ref{lem*} that $\phi(\Pi_M Tu_n, Tu_n) = \text{dist}_\phi(M, N)$. Now, since $T$ is proximally weakly suppressive operator, we have
$$
\phi(p, \Pi_M Tu_n) \leq \phi(p, u_n) - \eta (\phi(p, u_n)),
$$
where $\eta$ is as described in Definition \ref{d}(ii).
Now,
\begin{align*}
\phi(p, v_n) & = \phi(p, J_{B_1}^{-1}((1-\alpha_n) J_{B_1} u_n + \alpha_n  J_{B_1} \Pi_M Tu_n)) \\
& = \Vert p \Vert^2 - 2 \langle p, (1-\alpha_n) J_{B_1} u_n + \alpha_n  J_{B_1} \Pi_M Tu_n \rangle + \Vert (1-\alpha_n) J_{B_1} u_n + \alpha_n  J_{B_1} \Pi_M Tu_n \Vert^2 \\
& \leq \Vert p \Vert^2 - 2 (1-\alpha_n) \langle p, J_{B_1}u_n \rangle - 2 \alpha_n \langle p, J_{B_1} \Pi_M Tu_n \rangle + (1-\alpha_n) \Vert u_n \Vert^2 + \alpha_n \Vert \Pi_M Tu_n \Vert^2 \\
& \quad - \alpha_n (1-\alpha_n) \psi(\Vert J_{B_1} u_n - J_{B_1} \Pi_M Tu_n \Vert) \\
& = (1-\alpha_n) \phi(p, u_n) + \alpha_n \phi(p, \Pi_M Tu_n) - \alpha_n (1-\alpha_n) \psi(\Vert J_{B_1} u_n - J_{B_1} \Pi_M Tu_n \Vert) \\
& \leq (1-\alpha_n) \phi(p, u_n) + \alpha_n [\phi(p, u_n)- \eta(\phi(p, u_n)] - \alpha_n (1-\alpha_n) \psi(\Vert J_{B_1} u_n - J_{B_1} \Pi_M Tu_n \Vert) \\
& = \phi(p, u_n) - \alpha_n \eta(\phi(p, u_n) - \alpha_n (1-\alpha_n) \psi(\Vert J_{B_1} u_n - J_{B_1} \Pi_M Tu_n \Vert). \numberthis \label{sp1}
\end{align*}
Next, 
\begin{align*}
\phi(p, u_{n+1}) & = \phi(p, \Pi_M J_{B_1}^{-1}(J_{B_1} v_n + \gamma A^* J_{B_2}(U-I) Av_n)) \\
& \leq \phi(p, J_{B_1}^{-1}(J_{B_1} v_n + \gamma A^* J_{B_2}(U-I) Av_n)) \\
& = \Vert p \Vert^2 - 2 \langle p, J_{B_1} v_n + \gamma A^* J_{B_2}(U-I) Av_n \rangle + \Vert J_{B_1} v_n + \gamma A^* J_{B_2}(U-I) Av_n \Vert^2 \\
& \leq  \Vert p \Vert^2 - 2 \langle p, J_{B_1}v_n \rangle - 2 \gamma \langle p, A^* J_{B_2} (U-I) Av_n \rangle + \Vert v_n \Vert^2 \\
& \quad + \gamma^2 \Vert A^* \Vert^2 \Vert (U-I) Av_n \Vert^2 + 2 \langle J_{B_1}^* J_{B_1} v_n, \gamma A^* J_{B_2} (U-I)Av_n \rangle \\
& = \phi(p, v_n) + \gamma^2 \Vert A^* \Vert^2 \Vert (U-I) Av_n \Vert^2 + 2 \gamma \langle v_n - p, A^* J_{B_2} (U-I)Av_n \rangle . \numberthis \label{sp2}
\end{align*}
On the other hand, 
\begin{align*}
& \langle v_n-p, A^*J_{B_2}(U-I)Av_n \rangle \\
& = \langle Av_n-Ap, J_{B_2}(U-I)Av_n \rangle \\
& = - \langle (U-I)Av_n, J_{B_2}(U-I)Av_n \rangle+ \langle (U-I)Av_n+Av_n-Ap, J_{B_2}(U-I)Av_n \rangle \\
& = - \Vert (U-I)Av_n \Vert^2+\langle UAv_n-Ap, J_{B_2}(U-I)Av_n \rangle \\
& \leq  - \Vert (U-I)Av_n \Vert^2+\frac{1}{2} [ \Vert Av_n-Ap \Vert^2+\Vert (U-I)Av_n \Vert^2- \Vert U Av_n-Ap \Vert^2 ]\\
& = -\frac{1}{2} \Vert (U-I)Av_n \Vert^2. \numberthis \label{sp3}
\end{align*}
Hence, (\ref{sp2}) reduces to
\begin{align*}
\phi(p, u_{n+1}) & \leq \phi(p, v_n)+\gamma^2 L \Vert(U-I) Av_n \Vert^2 - \gamma \Vert (U-I)Av_n \Vert^2 \\
& = \phi(p, v_n) + \gamma (L \gamma -1) \Vert (U-I)Av_n \Vert^2. \numberthis \label{sp4}
\end{align*}
Since, $\gamma \in (0, \frac{1}{L})$, one has
\begin{align*}
\phi(p, u_{n+1}) \leq \phi(p, v_n) \leq \phi(p, u_n),~ \forall n \geq 0. \numberthis \label{sp5}
\end{align*}
This shows that $\{\phi(p, u_n)\}$ is a bounded and non-decreasing sequence and hence $\{u_n\}$ is also bounded and limit of $\phi(p, u_n)$ exists, $\forall n \geq 0$. This also implies that $\{v_n\}$ is bounded. \\
From (\ref{sp1}), we have
\begin{align*}
\alpha_n (1-\alpha_n) \psi(\Vert J_{B_1} u_n - J_{B_1} \Pi_M Tu_n \Vert) \leq  \phi(p, u_n) - \phi(p, v_n) - \alpha_n \eta(\phi(p, u_n). \numberthis \label{sp6}
\end{align*}
Since $\lim_{n \to \infty} \phi(p, u_n)$ exists and $\limsup_{n \to \infty} \alpha_n <1$, (\ref{sp6}) reduces to
\begin{align*}
\lim_{n \to \infty} \psi(\Vert J_{B_1} u_n - J_{B_1} \Pi_M Tu_n \Vert) =0,
\end{align*}
and by the properties of $\psi$ and uniform smoothness of $B_1$, we obtain that 
\begin{align*}
\lim_{n \to \infty} \Vert u_n - \Pi_M Tu_n \Vert =0. \numberthis \label{sp7}
\end{align*}
Next, from (\ref{sp4}), we deduce that
\begin{align*}
\gamma (L \gamma -1) \Vert (U-I)Av_n \Vert^2 & \leq \phi(p, v_n) - \phi(p, u_{n+1}) \leq \phi(p, u_n) - \phi(p, u_{n+1}).
\end{align*}
Hence, 
\begin{align*}
\lim_{n \to \infty} \Vert (U-I)Av_n \Vert =0. \numberthis \label{sp8}
\end{align*}
On the other hand, 
\begin{align*}
\phi(u_n, v_n) & = \phi(u_n, J_{B_1}^{-1}((1-\alpha_n) J_{B_1} u_n + \alpha_n  J_{B_1} \Pi_M Tu_n)) \\
& \leq \alpha_n \phi(u_n, u_n) +(1-\alpha_n) \phi(u_n, \Pi_M Tu_n).
\end{align*}
By using (\ref{sp7}), we obtain
\begin{align*}
\lim_{n \to \infty} \phi(u_n, v_n) =0, \numberthis \label{sp9}
\end{align*}
and so by Lemma \ref{imp1}, we have
\begin{align*}
\lim_{n \to \infty} \Vert u_n - v_n \Vert =0. \numberthis \label{sp10}
\end{align*}
Therefore, from (\ref{sp7}), we can conclude that 
\begin{align*}
\lim_{n \to \infty} \phi(u_n, Tu_n) & = \lim_{n \to \infty} [ \phi(u_n, \Pi_M Tu_n) + \phi(\Pi_M Tu_n, Tu_n) \\
& \quad + 2 \langle u_n - \Pi_M Tu_n, J \Pi_M Tu_n - J Tu_n \rangle ] \\
& = \text{dist}_\phi (M, N).
\end{align*}
This shows that $\{u_n\}$ is an (A$\phi$-BPS) with respect to the mapping $T$. \\
Since $\{u_n\}$ is bounded, there exists a weakly convergent subsequence, say $\{u_{n_k}\}$ of $\{u_n\}$ such that $u_{n_k} \xrightarrow{w} u^*,$ for some $u^* \in M$. Then, by (\ref{sp10}), $v_{n_k} \xrightarrow{w} u^*$ and so, $Av_{n_k} \xrightarrow{w} Au^*$. Since $T$ satisfies the $\phi$-proximal property and $\{u_n\}$ is (A$\phi$-BPS), so $\phi(u^*, Tu^*)=\text{dist}_\phi (M, N)$. i.e., $u^* \in \text{Best}_\phi^M (T)$. Also, using the fact that $U-I$ is demiclosed at zero, we conclude that $Au^* \in VIP(f, K)$. 
\end{proof}

\begin{remark}
Theorem \ref{weak} extends \cite[Theorem 3.1]{HA} in the setting of Hilbert spaces, where the convergence of sequences to best proximity points and solutions of monotone variational inclusion problems was proven under the assumption that the involved mappings are nonexpansive. In contrast, our result generalizes this framework to Banach spaces by considering proximally weakly suppressive mappings.
\end{remark}
\section{Concluding remarks}
  To conclude our work, we discussed the existence and convergence properties of $\phi$-best proximity points and characterized property ($\phi$-UC) in Banach spaces. We devised a shrinking projection scheme tailored for proving strong convergence theorem for the generated sequence of a non-self proximally weakly suppressive mapping. Furthermore, we proposed a projection method to address the split $\phi$-best proximity point problem and associated variational inequality problems. These contributions advance the theory of best proximity points and provide new tools for solving a broader class of nonlinear problems in Banach space settings.

\end{document}